\def\udcs{519.174.7} %Здесь автор определяет УДК своей работы
\def\mscs{05C50} %Здесь автор определяет классификаторы AMS своей работы
\newtheorem{lemma}{Lemma}
\newtheorem{theorem}{Theorem}
\newtheorem{corollary}{Corollary}
\def\logo{{\bf\huge S\raisebox{0.2ex}{\hspace{0.55ex}\raisebox{0.05ex}e\hspace{-1.65ex}$\bigcirc$}MR}}
\def\semrtop
\LARGE\tt{http://semr.math.nsc.ru}}\\[0.5mm]
\begin{document}
\renewcommand{\refname}{References}
\renewcommand{\proofname}{Proof}

\thispagestyle{empty}

\title[On perfect colorings of infinite multipath graphs]{On perfect colorings of infinite multipath graphs}
\author{{M. A. LISITSYNA, S. V. AVGUSTINOVICH, O. G. PARSHINA}}%

\address{Mariya Aleksandrovna Lisitsyna  % Контактные данные всех авторов и место работы указываются только на английском языке
\newline\hphantom{iii} Budyonny Military Academy of the Signal Corps,
\newline\hphantom{iii} pr. Tikhoretsky, 3,
\newline\hphantom{iii} 194064, St Petersburg, Russia}%
\email{lisitsyna.mariya.mathematician@gmail.com}%

\address{Sergey Vladimirovich Avgustinovich  % Контактные данные всех авторов и место работы указываются только на английском языке
\newline\hphantom{iii} Sobolev Institute of Mathematics,
\newline\hphantom{iii} pr. Koptyuga, 4,
\newline\hphantom{iii} 630090, Novosibirsk, Russia}%
\email{avgust@math.nsc.ru}%

\address{Olga Gennad'evna Parshina % Контактные данные всех авторов и место работы указываются только на английском языке
\newline\hphantom{iii} Czech Technical University in Prague,
\newline\hphantom{iii} Trojanova str., 13
\newline\hphantom{iii} 120 00, Prague, Czech Republic}%
\email{parolja@gmail.com}

\thanks{\sc Lisitsyna, M.A., Avgustinovich, S.V., Parshina, O.G., 
On perfect colorings of infinite multipath graphs}
\thanks{\copyright \ 2020 Lisitsyna M.A., Avgustinovich, S.V., Parshina O.G.}
\thanks{\rm This work was performed within the framework of the LABEX MILYON (ANR-10-LABX-0070) of Universit\'{e} de Lyon, within the program ``Investissements d'Avenir'' (ANR-11-IDEX-0007) operated by the French National Research Agency (ANR), and has been supported by RFBS grant 18-31-00009.}

\semrtop \vspace{1cm}
\maketitle {\small
\begin{quote}
\noindent{\sc Abstract.} A~coloring of vertices of a~given graph is called perfect if the color structure of each sphere of radius~$1$ in the graph depends only on the color of the sphere center.
Let $n$ be a~positive integer.
We consider a~lexicographic product of the infinite path
graph and a~graph $G$ that can be either the complete or
empty graph on~$n$ vertices.
We give a~complete description of perfect colorings with
an arbitrary number of colors of such graph products.
\medskip

\noindent{\bf Keywords:} perfect coloring, equitable partition, equivalent colors, infinite multipath graph.

\end{quote}
}

\section{Introduction}

Let $G$ be a~simple graph and $k$ a~positive integer.
A {\it perfect coloring} of the graph $G$ with the parameter matrix $M=(m_{ij})_{i,j=1}^k$ is a~map from the vertex set of the graph to the set of integers $\{1,2,3,\dots,k\}$ such that every vertex of color~$i$ is adjacent to exactly $m_{ij}$ vertices of color~$j$.

The concept behind the definition of the perfect coloring is quite natural. It arose and developed in connection with the problem of graph isomorphism recognition, and with the coding theory problems.
There are several equivalent notions indepen\-dently introduced in different contexts. For example, the notion of \textit{equitable partition} is used in works of C.~Godsil (ex.see~\cite{Godsil}). 
The notion of \textit{partition design} was introduced by P.~Camion, B.~Courteau, G.~Fournier, S.V.~Kanetkar in~\cite{CCFK} and is used in combinatorial design theory and in coding theory.

The problem of characterization of perfect colorings is an actual problem of coding theory, because the notion of perfect coloring is closely connected with many known codes such as perfect, completely regular and uniformly packed. For instance, a distance partition of a distance regular graph in accordance to a perfect code is a perfect coloring. It is worth mentioning that the coloring induced by a completely regular code introduced by P.~Delsarte~\cite{Delsarte} is perfect by definition.

The problem of existence of perfect codes in the $n$-dimensional hypercube graph has been attracting attention of mathematicians for more than half a century. 
Note that the best upper and lower bounds for a number of various 1-perfect codes differ essentially, what means that the complete description of them is far from being obtained. 
The perfect coloring with $k$ colors can be interpreted as a generalization of such codes in case of $k$-ary coding.

Sometimes the graph under consideration can be represented as the product of simpler graphs or graphs of lower dimensions. For example, the hypercube graph $E^{n}$ is the Cartesian product of graphs $E^{n-1}$ and $E$. 
Creation of the constructions that would help to obtain perfect colorings of graph products via colorings of their multipliers is a problem of interest in the areas of coding theory, algebraic combinatorics and graph theory.

Let us note that graphs products are interesting from the point of view of crystallography. Let $G^{*}$ be a product of graphs $G$ by $H$. The graph $G^{*}$ can be interpreted as a graph $G$ with vertices having the structure of kind $H$, i.e. every its vertex is a ``molecule of type $H$''. Perfect colorings of these graphs allow to model structures on crystals having several useful physical and chemical properties.

The lexicographic product of two graphs $G$ and $H$ is the~graph $G\cdot H$ such that its vertex set is the Cartesian product $V(G)\times V(H)$ and two vertices $(u_1, v_1)$ and $(u_2, v_2)$ are adjacent if and only if either $\{u_1,u_2\}\in E(G)$ or $u_1=u_2$ and $\{v_1,v_2\}\in E(H)$. 
The graph lexicographic product is also known as the graph composition~\cite{Harary}.

The {\it infinite path graph} is the graph whose set of vertices is the set of integers and two vertices $u$ and $v$ are adjacent if $|u-v|=1$.
Hereinafter, we denote the infinite path graph by~$C_\infty$ and, for a~transitive graph $G$, we call the graph $C_{\infty} \cdot G$ the {\it infinite $G$-times path}.

Let $n$ be a~positive integer.
By $K_n$ and $\overline{K_{n}}$ we denote the complete and empty graphs on~$n$ vertices.
In this paper we list the perfect colorings of $\overline{K_{n}}$- and $K_{n}$-times paths with an~arbitrary finite set of colors.
The local structure of the graphs $C_\infty\cdot \overline{K_3}$ and $C_\infty\cdot K_3$ is shown in Figure \ref{ris:Lisitsyna-figure}.

\begin{figure}[h!]
	\begin{center}
		\includegraphics[width=1.0\linewidth]{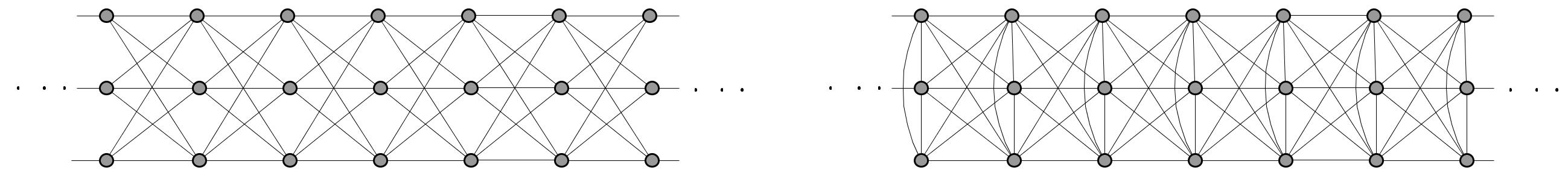}\par
		\caption{Local structure of $C_\infty\cdot \overline{K_3}$ (left) and $C_\infty\cdot K_3$ (right)}
		\label{ris:Lisitsyna-figure}
	\end{center}
\end{figure}

The graphs under consideration have an extensive structure, in other words, they contain $C_\infty$ as a~subgraph.
Perfect colorings of graphs with a~similar structure, such as
the infinite circulant graphs, infinite transitive grids,
infinite prism graph, were studied before.

The perfect colorings of the infinite prism graph with an~arbitrary finite number of colors are listed in~\cite{AvgLis2016}.
Several results on perfect colorings of circulant graphs are obtained by D.B.~Khoroshilova in~\cite{Khor11, Khor09}. She showed, in particular, that every perfect coloring listed in~\cite{Khor09} yields a perfect coloring of the $n$-dimensional infinite grid with the same parameter matrix.
Perfect 2-colorings for two families of infinite circulant graphs are
listed in~\cite{Par,Oddis}.
The complete description of  perfect colorings with an~arbitrary finite number of colors is obtained for the infinite circulant graphs with distances~$1$ and~$2$ in~\cite{LisPar2017}. 
Let us note that perfect colorings of circulant graphs can be used in mathematical optimization~\cite{table}.

A~coloring of a~graph is called {\it perfect of radius $r$} with the parameter matrix $M=(m_{ij})$ if for every vertex $x$ of color~$i$ the number of vertices of color~$j$ in the sphere of radius~$r$ with center~$x$ is equal to~$m_{ij}$.

First results on perfect colorings of the infinite rectangular grid graph $G(Z^2)$ were obtained by M.~Axenovich~\cite{Axenovich}.
She listed all admissible parameter matrices of perfect $2$-colorings of radius~$1$ for this graph and established several necessary conditions for a~matrix to be admissible for the graph in the case $r \geq 2$.
Parameters and properties of perfect colorings of $G(Z^2)$ were studied by S.A.~Puzynina in her thesis.
In~\cite{Puz_GZ2_Periodicity, PuzAvg2008} she showed that all perfect colorings of the infinite rectangular grid of radius $r > 1$ are periodic and proved
their pre-periodicity in the case $r = 1$.
A~technique of equivalent colors merging is proposed in~\cite{Puz_GZ2_Periodicity}; we will use this technique to prove the main result.
All admissible parameter matrices of order~$3$ for the graph $G(Z^2)$ 
were described in~\cite{Puz_GZ2_3-Colors}.
Perfect colorings with up to $9$~colors of this graph are listed by D.S.~Krotov in~\cite{KrotovG(Z2)}.

A~perfect coloring is called {\it distance regular} if its
parameter matrix can be reduced to the tridiagonal form.
The parameters of all distance regular colorings of the
infinite rectangular grid were listed by S.V.~Avgustinovich,
A.Yu.~Vasil'eva, and I.V.~Sergeeva in~\cite{AvgVasSerg}.

The pre-periodicity of perfect colorings of the hexagonal and
triangular grids was proven by S.A.~Puzynina in~\cite{Puzynina2011}.
For the infinite triangular grid, 
the distance regular colorings were listed by A.Yu.~Vasil'eva 
in~\cite{Vasil'eva2014}; for the hexagonal grid, 
they were later studied 
by S.V.~Avgustinovich, D.S.~Krotov, and A.Yu.~Vasil'eva~\cite{AvgKrotovVas}.

\section{Disjunctive perfect colorings of the graph
	lexicographic product}

Let $G$ and $H$ be simple graphs, where $G$ may be an
infinite graph, and $G \cdot H$ their lexicographic
product.

Let $k$ be a~positive integer.
The elements of the finite set
$I = \{1,2, \ldots ,k\}$ are called the {\it colors}.
Let $\psi : V(G) \rightarrow I$ be a~perfect coloring of
the graph $G$ and
$\Phi = \{\phi_1, \phi_2, \ldots \phi_k\}$ a~set of
perfect colorings of the graph $H$ with colors from sets
$J_1, J_2, \ldots J_k$ respectively, where
$J_p \cap J_q = \varnothing$ if $p \neq q$. 
We define the following coloring for the graph $G \cdot H$:
\begin{center}
	$\psi \cdot \Phi : V(G) \times V(H) \rightarrow J_1 \cup J_2 \cup \ldots \cup J_k$;
	\\
	$\psi \cdot \Phi(v_1,v_2)=\phi_{\psi(v_1)}(v_2)$.
\end{center}
Such a~structure on the graph $G \cdot H$ is called 
a~{\it disjunctive coloring}.
The formula in the definition  
reflects the fact that the perfect coloring $\phi_i$ of $H$ with colors from $J_i$ corresponds to the
color $i$ in the perfect coloring of $G$.

%The following lemma holds.

\begin{lemma} 
	A~disjunctive coloring of the graph $G \cdot H$ is perfect.
	\label{DisjunctiveColoringsLemma} 
\end{lemma}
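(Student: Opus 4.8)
The plan is to fix an arbitrary vertex $(v_1,v_2)$ of $G\cdot H$ and show that for every color $c'$ the number of its neighbours colored $c'$ is determined by the color $\psi\cdot\Phi(v_1,v_2)$ alone. Write $i=\psi(v_1)$, so the center's color $c_0=\phi_i(v_2)$ lies in $J_i$. The first thing I would record is the crucial consequence of the disjointness $J_p\cap J_q=\varnothing$ for $p\neq q$: from any color one reads off the unique index of the block containing it, so in particular $c_0$ already determines $i$. By the definition of the lexicographic product, the neighbourhood of $(v_1,v_2)$ splits into two disjoint parts: the $G$-type neighbours $(u,w)$ with $\{v_1,u\}\in E(G)$ and arbitrary $w\in V(H)$, and the $H$-type neighbours $(v_1,w)$ with $\{v_2,w\}\in E(H)$.

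Next I would count the $G$-type neighbours of a fixed color $c'\in J_p$. Such a neighbour $(u,w)$ has color $\phi_{\psi(u)}(w)$, which lies in $J_p$ exactly when $\psi(u)=p$, and then equals $c'$ for precisely $|\phi_p^{-1}(c')|$ choices of $w$, a fixed number since $H$ is finite. Since $\psi$ is a perfect coloring of $G$ with some parameter matrix $(a_{ij})$, the number of $G$-neighbours $u$ of $v_1$ with $\psi(u)=p$ equals $a_{ip}$. Hence the $G$-type neighbours of color $c'$ number $a_{ip}\,|\phi_p^{-1}(c')|$, which depends only on $i$ (hence on $c_0$) and on $c'$.

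Then I would count the $H$-type neighbours of color $c'$. Every $H$-type neighbour $(v_1,w)$ has color $\phi_i(w)\in J_i$, so it can be colored $c'$ only when $c'\in J_i$; in that case their number equals the number of $H$-neighbours $w$ of $v_2$ with $\phi_i(w)=c'$, which is the entry of the parameter matrix of the perfect coloring $\phi_i$ at the pair $(c_0,c')$ — again a function of $c_0$ and $c'$ only, because $\phi_i(v_2)=c_0$.

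Finally I would add the two contributions: for $c'\in J_p$ with $p\neq i$ only the $G$-type term survives, while for $c'\in J_i$ both terms contribute, and in either case the total is a function of $c_0$ and $c'$ alone. This is exactly the assertion that $\psi\cdot\Phi$ is perfect, and it simultaneously exhibits the resulting block-structured parameter matrix. I do not expect a genuine obstacle, as the argument is a direct verification; the one point requiring care — and the true engine of the lemma — is the disjointness of the palettes $J_1,\dots,J_k$, which is precisely what lets one recover $i$ from the center's color and thereby forces the $G$-type count to collapse to the single term $a_{ip}\,|\phi_p^{-1}(c')|$ rather than mixing contributions from copies of $H$ painted by different $\phi_p$.
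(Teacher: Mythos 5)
Your proposal is correct and follows essentially the same route as the paper's proof: both decompose the neighbourhood of a vertex of $G\cdot H$ into the within-copy ($H$-type) part and the adjacent-copies ($G$-type) part, use the perfectness of $\psi$ and of the $\phi_p$ for the two parts respectively, and rely on the disjointness of the palettes $J_1,\dots,J_k$ to recover the index $i=\psi(v_1)$ from the center's color. The only difference is presentational: the paper argues comparatively (two same-colored vertices have identical neighbourhood color structure), whereas you compute the counts explicitly, which as a bonus exhibits the block-structured parameter matrix $a_{ip}\,\bigl|\phi_p^{-1}(c')\bigr|$ plus the $\phi_i$-entry on the diagonal block.
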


\begin{proof} 
	Consider two vertices $(u_1,u_2)$ and
	$(w_1,w_2)$ colored with the same color in the structure
	$\psi \cdot \Phi(v_1,v_2)$:
	
	\begin{center}
		$\psi \cdot \Phi(u_1,u_2) = \psi \cdot \Phi(w_1,w_2) 
		\Rightarrow \phi_{\psi(u_1)}(u_2) = \phi_{\psi(w_1)}(w_2)$.
	\end{center}
	
	By the definition of the disjunctive coloring, 
	$\psi(u_1) = \psi(w_1)$.
	The vertices $u_1$ and $w_1$ are colored with the same
	color in the perfect coloring of~$G$; hence, the color
	structures of their neighborhoods coincide.
	Therefore, the copies of the graph $H$ neighboring $u_1$ and
	$w_1$ have equal multisets of colors.
	The colors of adjacent vertices from the $H$-copies corresponding
	to $u_1$ and $w_1$ are the same, since the colorings
	$\phi_{\psi(u_1)}$ and $\phi_{\psi(w_1)}$ are perfect.
	Thus, the neighborhoods of the vertices $(u_1,u_2)$ and $(w_1,w_2)$
	have the same color structure.
	Consequently, the coloring $\psi \cdot \Phi(v_1,v_2)$ is
	perfect.
\end{proof}

Herein, we consider the case when
$G = C_{\infty}$ and $H = \overline{K_{n}}$ or $H = K_{n}$.

We denote by $V_i$ the~copy of $\overline{K_{n}}$
with number~$i$ in the corresponding multipath and call it $i$-th {\it block}. The vertices of each block are
enumerated with the integers from $1$ to~$n$.
The $j$-th vertex of the $i$-th block in
$\overline{K_{n}}$-times path is denoted by $v_{ij}$.
We use the same enumeration when considering
the graph $C_{\infty} \cdot K_{n}$.

Note that any coloring of the empty or complete graph is perfect;
therefore, it can be used to construct a~disjunctive
coloring of $C_{\infty} \cdot \overline{K_{n}}$ and
$C_{\infty} \cdot K_{n}$ respectively.

\section{Equivalent colors in a~perfect coloring}
\label{sect3}

Consider a~finite regular graph $G=(V,E)$ and a~perfect
coloring $\phi : V \rightarrow I$ with the parameter matrix~$M$.
Two colors $i$ and $j$ in the perfect coloring
$\phi$ are called {\it equivalent} ($i \sim j$) if the coloring
obtained after their identification is perfect.
Note that the rows of the parameter matrix
corresponding to the colors $i$ and $j$ coincide up to the
elements of the columns $i$ and~$j$.
This property of the parameter matrix is equivalent to the
definition of equivalent colors.

\begin{lemma} 
	The relation $``\sim"$ defined above is an
	equivalence relation.
	Moreover, the coloring obtained by identifying colors in
	equivalent classes is perfect.
	\label{EquivalenceRelationLemma} 
\end{lemma}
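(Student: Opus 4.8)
The plan is to argue entirely through the matrix characterization stated just before the lemma: $i \sim j$ precisely when the $i$-th and $j$-th rows of $M$ agree in every column $\ell \notin \{i,j\}$, that is, $m_{i\ell}=m_{j\ell}$ for all such $\ell$. Reflexivity is immediate, since identifying a color with itself changes nothing, and symmetry is clear because the condition is symmetric in $i$ and $j$. The real content of the first assertion is transitivity.

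So suppose $i \sim j$ and $j \sim k$ with $i,j,k$ pairwise distinct (otherwise the conclusion is trivial); I want $m_{i\ell}=m_{k\ell}$ for every $\ell \notin \{i,k\}$. When moreover $\ell \neq j$, applying the two hypotheses in turn gives $m_{i\ell}=m_{j\ell}=m_{k\ell}$ at once. The only remaining case is $\ell=j$, i.e. the identity $m_{ij}=m_{kj}$, and this is the main obstacle: the row conditions by themselves do not force it, so it must be extracted from the global combinatorics of the coloring. Here I would invoke the edge-counting identity $n_p m_{pq}=n_q m_{qp}$, valid for any perfect coloring of a finite graph, where $n_p$ denotes the (nonzero) size of the $p$-th color class and both sides count the edges between the $p$-th and $q$-th classes. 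The hypotheses give $m_{ik}=m_{jk}$ and $m_{ji}=m_{ki}$, and the edge-counting identity applied to the three relevant pairs yields
\[
m_{ij}=\frac{n_j}{n_i}\,m_{ji}, \qquad m_{kj}=\frac{n_j}{n_k}\,m_{jk}, \qquad \frac{m_{ik}}{n_k}=\frac{m_{ki}}{n_i}.
\]
Substituting $m_{ik}=m_{jk}$ and $m_{ki}=m_{ji}$ into the last equation gives $m_{jk}/n_k=m_{ji}/n_i$, and multiplying by $n_j$ turns the first two formulas into $m_{kj}=m_{ij}$, which is exactly what was needed. Hence $i \sim k$, and $\sim$ is an equivalence relation.

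For the ``moreover'' statement, let the equivalence classes be $B_1,\dots,B_m$ and merge each class to a single color. A vertex whose original color is $i\in B_t$ has $\sum_{j\in B_s} m_{ij}$ neighbors in the merged class $B_s$, so I must verify this sum depends only on $t$ and $s$, not on the particular representative $i\in B_t$. Taking $i,i'\in B_t$, transitivity gives $i\sim i'$, hence $m_{ij}=m_{i'j}$ for all $j\notin\{i,i'\}$; this matches every term with $j\notin\{i,i'\}$ and, in particular, the whole sum when $s\neq t$. When $s=t$ the only possible discrepancy is between $m_{ii}+m_{ii'}$ and $m_{i'i}+m_{i'i'}$, and to close this I would use regularity: equating the $i$-th and $i'$-th row sums to the common degree $d$ and cancelling the equal off-diagonal entries yields $m_{ii}+m_{ii'}=m_{i'i}+m_{i'i'}$. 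Thus every vertex of a merged color sees the same color structure, so the merged coloring is perfect. The decisive step is the transitivity argument, where the edge-counting identity is indispensable; it is the place where the ambient perfectness of the coloring, and not merely the row pattern of $M$, enters the proof.
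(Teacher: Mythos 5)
Your proof is correct and follows essentially the same route as the paper: the matrix characterization of $\sim$, the double-counting identity $N_p\, m_{pq} = N_q\, m_{qp}$ for the bipartite subgraphs induced by pairs of color classes to obtain the missing equality $m_{ij}=m_{kj}$ in the transitivity step, and regularity of $G$ (equal row sums) to settle the diagonal discrepancy when merging a class. The only difference is cosmetic: since you divide only by the class sizes $n_i, n_k$ rather than by matrix entries, your computation absorbs the zero-entry situation that the paper must treat as a separate case before forming its ratios $N_1/N_2 = y/x$, etc.
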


\begin{proof} 
	Reflexivity and symmetry of the relation are obvious.
	To show transitivity, consider the colors $a$, $b$,
	and $c$ of a~perfect coloring $\phi$ such that $a \sim b$
	and $b \sim c$.
	Without loss of generality, we may suppose that $a = 1$,
	$b = 2$, and $c = 3$.
	The fragment of the parameter matrix $M$  
	corresponding to these colors has the form 
	\begin{center}
		$\begin{pmatrix} & x & p & \ldots \\ y & & p & \ldots \\ y & q & & \ldots \\ \vdots & \vdots & \vdots & \ddots \end{pmatrix}$;
	\end{center}
	the non-identified elements of the first
	three rows of $M$ coincide by the definition of~$``\sim"$.
	
	Note that $m_{ij}=0 \Leftrightarrow m_{ji}=0$ 
	for any two colors $i$ and~$j$.
	This, in particular, means that if at least one of the
	numbers $x$, $y$, $p$, and $q$ equals zero, then so do the others.
	In this case $1\sim 3$.
	
	Consider the case when none of the elements 
	$x$, $y$, $p$, and $q$ is equal to zero.
	We denote the number of vertices colored with $i$ by $N_i$.
	Consider three subgraphs of~$G$ induced by the sets of
	vertices colored with the colors $1$ and~$2$, $2$ and~$3$, $1$ and~$3$.
	Each of the induced subgraphs is a~biregular bipartite
	graph, and the following relations hold:
	\begin{center}
		$\frac{N_1}{N_2}=\frac{y}{x};\quad \frac{N_2}{N_3}=\frac{q}{p};\quad \frac{N_3}{N_1}=\frac{p}{y}$,
	\end{center}
	what leads to the equalities:
	\begin{center}
		$\frac{N_1}{N_2} \cdot \frac{N_2}{N_3} \cdot \frac{N_3}{N_1}
		=\frac{y}{x} \cdot \frac{q}{p} \cdot \frac{p}{y} \Rightarrow \frac{q}{x}=1 \Rightarrow x=q$.
	\end{center}
	The latter means that the colors~$1$ and~$3$ are equivalent,
	which proves transitivity of the relation.
	Thus $``\sim"$ is an equalence relation.
	
	The set $I$ can be split into disjoint equivalence
	classes by the relation $\sim$.
	Let us show that the coloring obtained by identifying
	colors in equivalent classes is perfect.
	
	%\begin{corollary}
	%Let $G=(V,E)$ be a graph, $I$ be a finite set of colors, and $\phi : V\rightarrow I$ be a perfect coloring of $G$.   is 
	%	Результат отождествления цветов в классах эквивалентности фактор-множества $I \slash \sim$ является совершенной coloring  графа $G$.
	%	\label{PerfectColoringAfterStick_corollary}
	%\end{corollary}
	%\begin{proof}
	
	Such a~coloring, denoted by $\hat\phi$, is obtained in the following way:
	every two vertices colored in equivalent colors in~$\phi$
	get the same color in~$\hat\phi$.
	The color corresponding to the equivalence class $[j]$ is
	denoted by $j^{*}$.
	The rows of the parameter matrix corresponding to the
	elements $x$ and $y$ from the class $[j]$ coincide up to
	the elements of the columns $x$ and~$y$.
	By regularity of~$G$, the sums $m_{xx}+m_{yx}$ and
	$m_{xy}+m_{yy}$ are equal, where the sums stand for the
	number of vertices of color $j^{*}$ adjacent to a~vertex
	colored in~$j^*$.
	Thus, the number of vertices of each color in every unit
	sphere with the center of color $j^*$ is the same
	and the coloring $\hat\phi$ is perfect.
\end{proof}

The identifying of colors in each class of
equivalence from the factor-set $I \slash \sim$ 
is referred to as the \textit{gluing operation}.
The coloring $\hat\phi$ obtained by gluing the colors of some perfect
coloring $\phi$ is called a~\textit{reduced} coloring.
Let $\phi$ be a~reduced coloring.
Every perfect coloring $\psi$ such that $\phi = \hat{\psi}$ 
is called the \textit{splitting} of~$\phi$.
Observe that a~reduced coloring $\phi$ can have 
several different splittings.
Thus, to enumerate all perfect colorings of the graph~$G$,
it suffices, first, to obtain a~complete description of its 
reduced colorings and, then, to consider all admissible splittings 
of the latter.

\section{Reduced colorings of $\overline{K_{n}}$- and $K_{n}$-times paths}

Any perfect coloring of the graph $C_{\infty} \cdot \overline{K_{n}}$ (or $C_{\infty} \cdot K_{n}$) is described by the sequence of color sets of it blocks up to equivalence. It is easy to be shown (using Dirichlet principle) that any such sequence is periodic, as if parameters of the perfect coloring are known and coloring of two neighbour blocks is fixed, then the whole coloring is uniquelly restored. Length of period of perfect coloring in this case is equal to number of blocks in period of such sequence of color sets.

To describe a perfect coloring of such graph, it suffices 
to indicate its least period,
which means that the perfect colorings of $\overline{K_{n}}$- and $K_{n}$-times paths
are the homomorphic inverse images of the perfect colorings
of the corresponding finite graphs. The latter allows us to apply methods and results of
Section~\ref{sect3} to infinite multipath graphs under consideration.

A~perfect coloring of the graph $C_{\infty} \cdot \overline{K_{n}}$ ( or $C_{\infty} \cdot K_{n}$)
is called {\it block-monochrome\/} 
if the vertices have the same color in each block.
The period of the block-monochrome perfect coloring of the graph
$C_{\infty} \cdot \overline{K_{n}}$ (or $C_{\infty} \cdot K_{n}$) will be denoted by  
a~row in square brackets whose elements 
are the colors of the blocks.

\begin{lemma} 
	The reduced colorings of the graphs
	$C_{\infty} \cdot \overline{K_{n}}$ and
	$C_{\infty} \cdot K_{n}$ are block-monochrome.
	\label{MonochromeBlocksColoringLemma} 
\end{lemma}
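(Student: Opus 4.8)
The plan is to argue by contradiction, extracting from a non-monochrome block a pair of distinct equivalent colors, which cannot occur in a reduced coloring. I would first record the adjacency pattern of the product: by the definition of the lexicographic product, any two vertices $v_{ij}$ and $v_{i'j'}$ with $|i-i'|=1$ are adjacent irrespective of $j$ and $j'$, so every vertex of the block $V_i$ is adjacent to all $n$ vertices of $V_{i-1}$ and all $n$ vertices of $V_{i+1}$. Inside a single block the adjacency is inherited from the second factor: in $C_\infty\cdot\overline{K_n}$ a block carries no edges, whereas in $C_\infty\cdot K_n$ any two distinct vertices of one block are adjacent.

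Suppose now, towards a contradiction, that a reduced coloring is not block-monochrome, so that some block $V_i$ contains vertices $v_{ij}$ and $v_{ij'}$ of distinct colors $c$ and $c'$. The heart of the argument is to compare the color structures of their neighborhoods, which are recorded by rows $c$ and $c'$ of the parameter matrix $M$. In the empty-graph case these neighborhoods coincide as sets, each being exactly $V_{i-1}\cup V_{i+1}$, so rows $c$ and $c'$ of $M$ are identical. In the complete-graph case the two neighborhoods share the common set $V_{i-1}\cup V_{i+1}\cup(V_i\setminus\{v_{ij},v_{ij'}\})$ and differ only in that the neighborhood of $v_{ij}$ contains $v_{ij'}$ while the neighborhood of $v_{ij'}$ contains $v_{ij}$; consequently $m_{cd}=m_{c'd}$ for every color $d\notin\{c,c'\}$.

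In both cases rows $c$ and $c'$ of $M$ agree outside the columns $c$ and $c'$, which by the characterization of equivalent colors in Section~\ref{sect3} is exactly the statement that $c\sim c'$. Since the coloring is reduced, its equivalence classes are singletons, so $c\sim c'$ forces $c=c'$, contradicting $c\neq c'$; hence every block is monochrome. I expect the only delicate point to be the complete-graph case, where the neighborhoods are not literally equal and one must verify that the single exchanged vertex perturbs only columns $c$ and $c'$, matching precisely the off-diagonal condition defining $\sim$. The empty-graph case is immediate from the outright coincidence of the two neighborhoods.
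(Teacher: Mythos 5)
Your neighborhood computation is correct and is exactly the paper's key observation: in $C_\infty\cdot\overline{K_n}$ two vertices of one block have identical neighborhoods, in $C_\infty\cdot K_n$ their neighborhoods differ only in the two vertices themselves, so any two colors that meet inside a block satisfy the matrix criterion of Section~\ref{sect3} and are therefore equivalent. The genuine gap is the final step, namely the claim that ``since the coloring is reduced, its equivalence classes are singletons.'' The paper defines a reduced coloring as the result of applying the gluing operation to \emph{some} perfect coloring, and this does not imply that the resulting coloring has no pair of distinct equivalent colors: gluing can create new equivalences. The paper's own objects supply a counterexample. The coloring $S(2)=[0\;1]$ is a reduced coloring of $C_\infty\cdot\overline{K_n}$ (for instance, it is the gluing of the block-monochrome coloring with period $[0\;1\;0\;2]$, in which colors $1$ and $2$ are equivalent while neither is equivalent to $0$), and yet the two colors of $S(2)$ are equivalent to each other, since identifying them produces the perfect coloring $S(1)$; that $S(2)$ is reduced is also presupposed by Corollary~\ref{BlockMonochromeColoringsWithNonDisjunctiveSplit_corollary}. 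So ``reduced'' in the paper's sense is fully compatible with the existence of distinct equivalent colors, and the contradiction you aim for never materializes.

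The repair is to apply your computation not to the reduced coloring $\hat\psi$ itself but to a perfect coloring $\psi$ whose gluing is $\hat\psi$ --- which is precisely the paper's direct (non-contradiction) proof. In $\psi$, all colors occurring in one block are pairwise equivalent by your neighborhood argument, hence lie in a single class of the equivalence relation on the colors of $\psi$, hence are identified by the gluing operation; consequently every block of $\hat\psi$ is monochrome. This formulation needs no assertion about the equivalence relation of the reduced coloring itself and closes the gap; everything else in your write-up (the adjacency description of the lexicographic product and the two case analyses) can be kept verbatim.
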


\begin{proof} 
	Consider the perfect colorings of the graphs
	from $\overline{K_{n}}$- and $ K_{n}$-times paths.
	The neighborhoods of the vertices of a~$\overline{K_{n}}$-copy in 
	$C_{\infty} \cdot \overline{K_{n}}$ coincide;
	therefore, the colors assigned to such vertices are equivalent.
	The neighborhoods of two vertices from the same block of 
	$C_{\infty} \cdot K_{n}$ differ in exactly these two vertices.
	Thus, the colors corresponding to such vertices are also equivalent.
	
	Hence, gluing the perfect colorings of such graphs 
	leads to the vertices of each copy to be colored with one color, 
	i.e., a~block-monochrome perfect coloring is obtained.
\end{proof}

There is a~one-to-one correspondence between
the perfect colorings of an~infinite path graph 
and the block-monochrome perfect colorings of $C_{\infty} \cdot \overline{K_{n}}$ and $C_{\infty} \cdot K_{n}$; 
the latter are obtained by $\overline{K_{n}}$-times or $K_{n}$-times copying of the former.

The perfect colorings of infinite path graph, 
in their turn, are well studied.
We recall the terminology and well-known facts 
that can be found in~\cite{AvgLis2016}.

Every perfect coloring of $C_{\infty}$ is periodic.
We also denote its period by a~row in square brackets.
The same notation is chosen due to the fact that the~infinite
path graph is a~particular case of the graph $C_{\infty} \cdot \overline{K_{n}}$.
The colorings of the~infinite path graph with the periods
$S_{11}(k)=[k{-}1\;k{-}2\;\ldots\;1\;0\;1\;\ldots\;k{-}3\;k{-}2]$,
$S_{12}(k)=[k{-}1\;k{-}2\;\ldots\;1\;0\;1\;\ldots\;k{-}2\;k{-}1]$,
and $S_{22}(k)=[k{-}1\;k{-}2\;\ldots\;1\;0\;0\;$ \-$1\;\ldots\;k{-}2\;k{-}1]$
are called the \textit{mirror colorings of types~$(1, 1)$, $(1, 2)$ and~$(2,2)$};
with the periods
$S(k)=[0\;1\;2\;\ldots\;k{-}2\;k{-}1]$, they are the \textit{cyclic} colorings.
The type of a~mirror coloring is determined by the number of vertices 
of the first and last colors ($0$ and $k{-}1$) in the period.

Henceforth, by the cyclic and mirror colorings
of $\overline{K_{n}}$- and $K_{n}$-times paths we understand 
the block-monochrome colorings of these graphs
corresponding to the cyclic and mirror colorings of~$C_{\infty}$.

The following lemma describes all perfect colorings
of the~infinite path graph \cite{AvgLis2016}:

\begin{lemma} 
	The perfect colorings of the graph $C_{\infty}$ are exhausted
	by the following four infinite series: 
	three series of mirror %colorings 
	and one series of cyclic colorings.
	\label{PerfectColoringsOfInfinitePath} 
\end{lemma}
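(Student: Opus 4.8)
The plan is to exploit that $C_{\infty}$ is $2$-regular, so every row of the parameter matrix $M$ sums to $2$; hence for each color $a$ the two neighbors of any $a$-colored vertex form a fixed multiset $N(a)$ of size $2$. First I would record the determinism already used above: a pair of consecutive colors $(c_i,c_{i+1})=(a,b)$ forces $c_{i+2}$ to be the element of $N(b)$ complementary to $a$ (and symmetrically $c_{i-1}$ is recovered from $N(a)$), so the coloring is reconstructed from any consecutive pair and is periodic. Without loss of generality the coloring uses all $k$ colors.

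Next I would classify colors by their row in $M$, using that each row is a sum-$2$ vector of nonnegative integers together with the fact $m_{ab}=0\Leftrightarrow m_{ba}=0$. For a color $a$ the only possibilities are: $m_{aa}=2$ (monochrome, forcing the whole line to one color, i.e. $k=1$); $m_{aa}=0$ with $m_{ab}=2$ for a single $b\neq a$ (a \emph{single reflector}, local pattern $\dots b\,a\,b\dots$); $m_{aa}=1,\ m_{ab}=1$ (a \emph{double reflector}, pattern $\dots b\,a\,a\,b\dots$); or $m_{ab}=m_{ac}=1$ with distinct $b,c\neq a$ (a \emph{pass-through} color). The point of the pattern descriptions is that the transition rule cannot reverse direction at a pass-through color (it always proceeds to the other neighbor), while it reverses exactly at a reflector, producing a single or a doubled occurrence of the reflector color.

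Then I would build the simple auxiliary graph $Q$ on the used colors, joining distinct $a,b$ iff $m_{ab}>0$ (this is well defined and symmetric by $m_{ab}=0\Leftrightarrow m_{ba}=0$), while double reflectors are additionally marked by $m_{aa}=1$. The classification shows every vertex of $Q$ has degree $2$ (pass-through colors) or $1$ (reflectors). Since consecutive colors of the coloring are $Q$-adjacent and all colors are used, $Q$ is connected, and a connected graph of maximum degree $2$ is a path or a cycle. If $Q$ is a cycle, all colors are pass-through and the determinism drives the walk once around the cycle, giving precisely the cyclic coloring $S(k)$. If $Q$ is a path, its two endpoints are the reflectors and the internal vertices are pass-through, so the walk ascends the path, bounces at one end, descends, bounces at the other, and repeats; labeling the endpoints $0$ and $k-1$ yields exactly a mirror coloring, whose type $(1,1)$, $(1,2)$, or $(2,2)$ is read off from whether each endpoint is a single or a double reflector.

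I expect the main obstacle to be the reconstruction step: verifying rigorously that the combinatorial shape of $Q$ (path or cycle, with prescribed endpoint types) forces the bi-infinite sequence to be exactly the stated periodic word, with each color occurring the correct number of times. The delicate point is the behavior at a double reflector, where one must check from the transition rule that entering the color $a$ (with $N(a)=\{a,b\}$) from $b$ sends the walk to $a$ again and only then back to $b$, producing the doubled occurrence $b\,a\,a\,b$; together with the no-reversal property at pass-through colors this pins down the monotone up-and-down traversal and hence the periods $S(k)$, $S_{11}(k)$, $S_{12}(k)$, and $S_{22}(k)$.
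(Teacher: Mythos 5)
Your proof cannot be compared with the paper's, because the paper contains no proof of this lemma: it is stated as a known fact and attributed to~\cite{AvgLis2016}, where the infinite path is treated as a special case of the prism graph. On its own merits, your argument is correct and self-contained. The key steps all hold: $2$-regularity forces each row of $M$ to sum to $2$, giving a fixed neighborhood multiset $N(a)$ per color and two-sided determinism from any consecutive pair of colors (whence periodicity by pigeonhole); the row classification into monochrome ($m_{aa}=2$), single reflector ($m_{ab}=2$), double reflector ($m_{aa}=m_{ab}=1$), and pass-through ($m_{ab}=m_{ac}=1$) is exhaustive; the color graph $Q$ is well defined since $m_{ab}=0\Leftrightarrow m_{ba}=0$, it is connected because the bi-infinite word is a walk in $Q$ visiting all used colors, and having maximum degree at most $2$ it must be a path or a cycle. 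The walk analysis then recovers $S(k)$ in the cycle case and $S_{11}(k)$, $S_{12}(k)$, $S_{22}(k)$ in the path case according to whether each endpoint is a single or a double reflector; your patterns $b\,a\,b$ and $b\,a\,a\,b$, combined with no reversal at pass-through colors, are exactly what pins down the up-and-down sweep. Two small bookkeeping points you should make explicit: the monochrome case is the degenerate cyclic coloring $S(1)$; and for $k=2$ the graph $Q$ cannot be a cycle, so the alternating coloring arises from the path case as $S_{11}(2)$ (the four series overlap for small $k$, which is harmless). With these remarks folded in, the ``main obstacle'' you flag at the end is fully handled by the reversal analysis you already sketch, so the proposal stands as a complete proof of a statement the paper only cites.
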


\begin{corollary} 
	The reduced colorings of the graphs
	$C_{\infty} \cdot \overline{K_{n}}$ and
	$C_{\infty} \cdot K_{n}$ are exhausted by four infinite series
	corresponding to the perfect colorings of the graph $C_{\infty}$.
	\label{MonochromeBlockColoringsOfMultiPathCorollary} 
\end{corollary}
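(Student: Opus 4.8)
The plan is to assemble the three preceding results into a single chain. I would begin with an arbitrary reduced coloring $\phi$ of $C_{\infty}\cdot\overline{K_n}$; the case of $C_{\infty}\cdot K_n$ is treated identically. By Lemma~\ref{MonochromeBlocksColoringLemma}, every reduced coloring of these graphs is block-monochrome, so $\phi$ assigns a single color to each block $V_i$ and therefore induces a well-defined coloring $\bar\phi$ of $C_\infty$ by setting $\bar\phi(i)$ equal to the common color of block $V_i$.

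The next step is to invoke the one-to-one correspondence recorded just before Lemma~\ref{PerfectColoringsOfInfinitePath}: the block-monochrome perfect colorings of $C_{\infty}\cdot\overline{K_n}$ (respectively $C_{\infty}\cdot K_n$) are exactly the $\overline{K_n}$-times (respectively $K_n$-times) copies of the perfect colorings of $C_\infty$, and this copying is a bijection that preserves perfectness in both directions. Under this correspondence $\phi$ is the copy of the perfect coloring $\bar\phi$ of $C_\infty$.

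I then apply Lemma~\ref{PerfectColoringsOfInfinitePath}, which classifies the perfect colorings of $C_\infty$ as the three mirror series together with the single cyclic series. Hence $\bar\phi$ lies in one of these four series, and $\phi$ is its block-monochrome copy. Conversely, copying any coloring from the four $C_\infty$-series produces a block-monochrome, hence reduced, coloring of the product graph. Therefore the reduced colorings of $C_{\infty}\cdot\overline{K_n}$ and $C_{\infty}\cdot K_n$ are exhausted by precisely these four infinite series.

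I do not anticipate a genuine obstacle, since the two substantive ingredients---the reduction to block-monochrome colorings in Lemma~\ref{MonochromeBlocksColoringLemma} and the classification in Lemma~\ref{PerfectColoringsOfInfinitePath}---are already in hand. The only point deserving a line of care is checking that the copying correspondence is bijective and respects the perfect-coloring property in both directions, which is exactly the content of the remark preceding Lemma~\ref{PerfectColoringsOfInfinitePath}.
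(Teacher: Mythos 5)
Your argument for the containment that the corollary actually asserts---every reduced coloring of $C_{\infty}\cdot\overline{K_n}$ or $C_{\infty}\cdot K_n$ lies in one of the four series---is correct and is exactly the paper's (implicit) reasoning: the paper states this corollary without a separate proof, as an immediate consequence of Lemma~\ref{MonochromeBlocksColoringLemma}, the copying correspondence stated just before Lemma~\ref{PerfectColoringsOfInfinitePath}, and Lemma~\ref{PerfectColoringsOfInfinitePath} itself.

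One caution about the converse direction you added: the step ``block-monochrome, hence reduced'' is not valid. ``Reduced'' means \emph{obtained by the gluing operation from some perfect coloring}, and a block-monochrome perfect coloring need not be of this form. The paper itself makes this point in the proof of Lemma~\ref{BlockMonochromeColoringsWithNonDisjunctiveSplit_lemma}: the block-monochrome coloring $S_{11}(3)$ cannot be obtained by gluing, since any perfect coloring that might reduce to it glues immediately to $S(2)$ instead (in $S_{11}(3)$ the two extreme colors $0$ and $k-1=2$ are themselves equivalent, so the gluing operation never stops at $S_{11}(3)$). Because ``exhausted by'' requires only your forward containment, this superfluous claim does not invalidate your proof of the corollary, but as written it is false; you should delete it or weaken it to the true statement that each coloring in the four series is block-monochrome.
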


Thus, we obtained a~complete description of the reduced perfect
colorings of the infinite $\overline{K_{n}}$- and $K_{n}$-times path graphs.

Let $V_i$ be a~block of one of the multipath graphs under consideration,
$\phi$ the reduced coloring of that multipath graph, 
and $\psi$ its splitting.
Thus, the multiset of colors corresponding to the vertices of this block
in the coloring $\psi$ is $\psi(V_{i})$.

\begin{lemma} 
	Let $V_i$ and $V_j$ be the blocks of $C_{\infty} \cdot \overline{K_{n}}$ or
	$C_{\infty} \cdot K_{n}$ the colors of which in the reduced coloring
	$\phi$ do not coincide and are equal to $a$ and $b$ respectively.
	Then $\psi(V_{i}) \cap \psi(V_{j}) = \varnothing$
	for every splitting $\psi$ of $\phi$. 
	\label{EmptyIntersectionOfDifferentColoredBlocksSplit_lemma}
\end{lemma}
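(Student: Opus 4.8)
The plan is to unwind the definitions of the gluing operation and of a splitting, after which the claim reduces to the block-monochromaticity established earlier. Since $\psi$ is a splitting of $\phi$, we have $\hat{\psi} = \phi$; that is, $\phi$ is obtained from $\psi$ by the gluing operation. Concretely, the palette of $\psi$ partitions under the equivalence relation $\sim$ into classes, each class is a color of $\phi$, and for every vertex $v$ one has $\phi(v) = [\psi(v)]$, the $\sim$-class containing $\psi(v)$. Thus, viewing each color of $\phi$ as the subset of the $\psi$-palette it represents, a vertex satisfies $\phi(v) = a$ precisely when $\psi(v) \in a$.

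First I would record this correspondence cleanly: by Lemma~\ref{EquivalenceRelationLemma} the relation $\sim$ is an equivalence relation, so its classes partition the palette of $\psi$; in particular every $\psi$-color lies in exactly one class, and distinct colors $a \neq b$ of $\phi$ correspond to disjoint subsets of that palette, $a \cap b = \varnothing$. Next I would invoke Lemma~\ref{MonochromeBlocksColoringLemma}, which guarantees that the reduced coloring $\phi$ is block-monochrome. Hence every vertex of the block $V_i$ carries $\phi$-color $a$, so every color occurring in the multiset $\psi(V_i)$ belongs to the class $a$; symmetrically, every color of $\psi(V_j)$ belongs to the class $b$. In other words, $\psi(V_i)$ is supported on $a$ and $\psi(V_j)$ on $b$.

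Finally, combining the two observations: the supports of $\psi(V_i)$ and $\psi(V_j)$ lie in the disjoint subsets $a$ and $b$ of the $\psi$-palette, whence $\psi(V_i) \cap \psi(V_j) = \varnothing$, as required.

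There is no genuine combinatorial obstacle here; the statement is essentially a restatement of the defining property of a splitting together with the block-monochromaticity of reduced colorings. The only point requiring care is the well-definedness of the assignment sending each $\psi$-color to the $\phi$-color of its class, i.e. that each $\psi$-color lies in a single $\sim$-class. This is immediate from Lemma~\ref{EquivalenceRelationLemma}, since the classes of an equivalence relation partition the set on which it is defined.
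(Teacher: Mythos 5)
Your proof is correct and is essentially the paper's argument written out in full: the paper's one-line proof (``otherwise, applying the gluing operation to $\psi$ would give $a=b$'') is exactly the contrapositive of your direct observation that gluing sends each $\psi$-color to a single $\sim$-class, so the monochrome blocks $V_i$ and $V_j$ are supported on the disjoint classes $a$ and $b$. No difference in substance, only in the level of detail and the direct versus by-contradiction phrasing.
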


\begin{proof} 
	Otherwise, by applying the gluing operation to the perfect coloring $\psi$, 
	we obtain $a=b$.
\end{proof}

Note that if there are blocks $V_{p}$ and $V_{q}$ 
in the graph $C_{\infty} \cdot G$ such that 
$\psi(V_{p}) \neq \psi(V_{q})$ and
$\psi(V_{p}) \cap \psi(V_{q}) \neq \varnothing$, then
the coloring $\psi$ is not disjunctive.

Thus, to complete the characterization of the perfect colorings
of the $\overline{K_{n}}$- and $K_{n}$-times path graphs, 
it remains to describe all disjunctive and non-disjunctive splittings  
of the reduced colorings of these graphs.

\section{Perfect colorings of the~$\overline{K_{n}}$-times path}

The graph $C_{\infty} \cdot \overline{K_{n}}$ being bipartite
is one of its important structural properties.
This allows us to use the ideas on colorings of bipartite
graphs presented in~\cite{AvgLis2016}.
Let us give some necessary definitions.

Let $G(V_{1},V_{2})$ be a~bipartite graph with the parts
$V_{1}$ and $V_{2}$.
A~coloring of one of the graph parts is a~{\it semicoloring} of~$G$.
A~semicoloring is called \textit{admissible} if it is a~part
of the~perfect coloring of~$G$. If a~semicoloring belongs to the~reduced coloring of~$G$ it is \textit{reduced} semicoloring.
Two admissible semicolorings of a~graph are {\it conjugate},
if they complement each other to make a~perfect coloring of the graph.
 
Admissible semicolorings of $C_{\infty} \cdot \overline{K_{n}}$ are periodic, because perfect colorings of the whole graph are periodic. Length of period of admissible semicoloring is equal to number of blocks in such period. 

A~perfect coloring of a~bipartite graph is \textit{bipartite}
if the color sets of its semicolo\-rings are disjoint;
otherwise, the coloring is \textit{non-bipartite}.
Note that the color sets of semicolorings coincide in non-bipartite case if $G$ is connected.

In non-disjunctive perfect colorings, the number of vertices
of the given color in different blocks can be different.
Denote by $N_{j}(i)$ 
the number of $j$-colored vertices in the block $V_{i}$
of~$C_{\infty} \cdot \overline{K_{n}}$ or
$C_{\infty} \cdot K_{n}$.

Let us describe a construction for the~$\overline{K_{n}}$-times path. We consider a coloring $\psi$ of $C_{\infty} \cdot \overline{K_{n}}$ with period of length 4. If equality  $N_{j}(i-1) + N_{j}(i+1)=N_{j}(i) + N_{j}(i+2)$ holds for every color $j$ of $\psi$ and every $i$ then semicolorings of $\psi$ are called \textit{matched}. It is easy to be shown that the validaty of latter condition for all colors of $\psi$  implies its perfectness.

To make the structure of the further arguments clear, let us
formulate the main result of this section:

\begin{theorem} 
	The perfect colorings of the graph
	$C_{\infty} \cdot \overline{K_{n}}$ 
	are exhausted by the following list:
	
	$1$~Disjunctive perfect colorings; 
	
    $2$~Non-disjunctive bipartite  colorings obtained by conjugation
	of $2$-periodic semicolo\-rings with disjoint sets of colors;
	
	$3$~Non-disjunctive non-bipartite colorings obtained by conjugation
	of two matched $2$-periodic semicolorings.
	\label{EmptyGraphTimesPathTheorem}
\end{theorem}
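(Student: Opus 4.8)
The plan is to classify all \emph{splittings} of the reduced colorings, exploiting the rigid local structure of $C_\infty\cdot\overline{K_n}$. First I record the decisive geometric fact: since $\overline{K_n}$ has no edges, every vertex of a block $V_i$ is adjacent to \emph{all} vertices of $V_{i-1}$ and of $V_{i+1}$ and to nothing else. Hence all vertices of a block have the same neighbourhood, and perfectness of a coloring $\psi$ is equivalent to the statement that for each color $j$ there is a fixed multiset $M_j$ with $\psi(V_{i-1})+\psi(V_{i+1})=M_j$ (as a multiset union) for every block $V_i$ meeting color $j$. By Corollary~\ref{MonochromeBlockColoringsOfMultiPathCorollary} any perfect coloring $\psi$ is a splitting of one of the four series, so its blocks inherit a periodic reduced-color label $r_i$, and by Lemma~\ref{EmptyIntersectionOfDifferentColoredBlocksSplit_lemma} colors refining distinct reduced colors never meet. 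If $\psi$ is disjunctive it belongs to item~$1$, so from now on I assume $\psi$ is non-disjunctive, i.e.\ some reduced color $a$ carries two blocks with distinct color multisets.

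Second, I would extract the main leverage from Lemma~\ref{EquivalenceRelationLemma}: in any splitting all colors refining a single reduced color $a$ are pairwise equivalent, so their rows of the parameter matrix agree off their own columns. Reading this through the neighbourhood description above gives a dichotomy governed by the reduced labels of the neighbours of an $a$-block. Away from the ``turning points'' of the series---when the two reduced neighbours of $a$ are colors different from $a$---the neighbourhood of an $a$-block contains no color refining $a$, so equivalence upgrades to full equality $M_c=M_{c'}=:M_a$ for all colors $c,c'$ refining $a$. Decomposing $M_a$ by reduced color (legitimate by Lemma~\ref{EmptyIntersectionOfDifferentColoredBlocksSplit_lemma}) then forces \emph{every} block whose reduced color is a neighbour of $a$ to carry one fixed multiset; propagating this equality around a period collapses all blocks of a given reduced color to a single multiset, i.e.\ forces disjunctiveness.

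Third---and this is where the three items separate---I would run this propagation along each of the four series and show that a twice-occurring reduced color is pinned down, forcing disjunctiveness, as soon as the reduced colors flanking it also occur more than once per period. Non-disjunctive behaviour therefore survives only in the shortest members of the series, where the period divides $4$: the cyclic coloring $S(2)$, in which both neighbours of a color share one reduced color so that $M_a$ cannot be decoupled and the constancy of $\psi(V_{2m-1})+\psi(V_{2m+1})$ telescopes to $\psi(V_i)=\psi(V_{i+4})$; the mirror coloring $S_{11}(3)=[2\;1\;0\;1]$, where the twice-occurring color $1$ is flanked by the once-occurring colors $0$ and $2$, so its two blocks are free to differ; and the mirror coloring $S_{22}(2)=[1\;0\;0\;1]$, whose adjacent pair of equal reduced colors places one reduced color in both parts. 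The odd-period series $S_{12}$ never meets the divisibility constraint and contributes only disjunctive colorings. Establishing that no longer-period splitting escapes the propagation---uniformly over all four series and both kinds of mirror endpoint---is the main obstacle, and it rests on a careful count of which reduced colors occur once versus twice within a period.

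Finally, having reduced the non-disjunctive case to $2$-periodic semicolorings (period $4$ overall), I would close by the dichotomy of whether the color sets of the two semicolorings are disjoint or coincide, the only two possibilities on a connected bipartite graph. The disjoint case ($S(2)$ and $S_{11}(3)$, where each reduced color stays within one part) gives precisely the non-disjunctive bipartite colorings of item~$2$, realised as the conjugation of two $2$-periodic semicolorings; the coinciding case ($S_{22}(2)$, where a reduced color spans both parts) is non-bipartite, and writing out the per-color neighbourhood identity $\psi(V_{i-1})+\psi(V_{i+1})=M_j$ becomes exactly the matched condition $N_j(i-1)+N_j(i+1)=N_j(i)+N_j(i+2)$ of item~$3$. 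Conversely, Lemma~\ref{DisjunctiveColoringsLemma} certifies item~$1$ and the matched-condition remark preceding the theorem certifies that the constructions of items~$2$ and~$3$ are indeed perfect, so the list is exhaustive.
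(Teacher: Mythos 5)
Your proposal has the right skeleton---gluing, rigidity of block neighborhoods, a telescoping argument for $4$-periodicity, and a final bipartite/non-bipartite dichotomy ending in the matched condition---and its endgame parallels the paper's. But it has a genuine gap, and the case structure built around that gap is factually wrong. The gap is that the decisive step is deferred rather than proved: you yourself write that establishing that no longer-period splitting escapes the propagation ``is the main obstacle'' and that it ``rests on a careful count'' over the four series and the mirror endpoints, but that count never appears, and it is exactly the content of the theorem. The paper needs no series-by-series propagation at all: in Lemma~\ref{BlockMonochromeColoringsWithNonDisjunctiveSplit_lemma} a single local argument shows that if two blocks $V_i$, $V_j$ of the same reduced color $a$ satisfy $\psi(V_i)\neq\psi(V_j)$ while the semicoloring containing them is not monochrome, then $V_{i+1}$ and $V_{j+1}$ carry a common reduced color $x$ and $V_{i+2}$, $V_{j+2}$ a common color $b\neq a$; comparing neighborhoods of the $x$-colored vertices gives $\psi(V_{i}) \cup \psi(V_{i+2}) = \psi(V_{j}) \cup \psi(V_{j+2})$, and Lemma~\ref{EmptyIntersectionOfDifferentColoredBlocksSplit_lemma} separates this union into $\psi(V_i)=\psi(V_j)$ and $\psi(V_{i+2})=\psi(V_{j+2})$, a contradiction. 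Until you supply an argument of comparable strength, your list of surviving cases is an assertion, not a proof.

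Moreover, that list is incorrect, so completing your outline as stated would fail. The reduced colorings admitting non-disjunctive splittings are $S(1)$ and $S(2)$ (Corollary~\ref{BlockMonochromeColoringsWithNonDisjunctiveSplit_corollary}); your list $S(2)$, $S_{11}(3)$, $S_{22}(2)$ errs in both directions. Neither $S_{11}(3)$ nor $S_{22}(2)$ is the gluing image of any perfect coloring: in a coloring with block pattern $[2\;1\;0\;1]$, every color occurring in a $0$-block and every color occurring in a $2$-block has the same neighborhood multiset (the sum of the two flanking $1$-blocks), so these colors are equivalent and the gluing collapses to $S(2)$---this is precisely the paper's remark inside the proof of Lemma~\ref{BlockMonochromeColoringsWithNonDisjunctiveSplit_lemma}; likewise the pattern $[1\;0\;0\;1]$ collapses to $S(1)$. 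Conversely, you omit $S(1)$, which is the reduced coloring of every non-bipartite non-disjunctive coloring. A concrete example shows your shape-based dichotomy misfiles these colorings: for $n=2$, the coloring with block period $\{1,1\},\ \{1,2\},\ \{2,2\},\ \{1,2\}$ is perfect, matched, non-disjunctive and non-bipartite (color $1$ meets both parts); its gluing is $S(1)$, yet its multiset pattern is $A\,B\,C\,B$---your ``$S_{11}(3)$ shape,'' which you declared bipartite---and it fits neither your $S_{11}(3)$ case (which requires three pairwise disjoint color groups) nor your $S_{22}(2)$ case (which requires two). So this coloring falls through your case analysis entirely. The correct organization, as in Lemma~\ref{NonDisjunctiveColoringsOfEmptyGraphTimesPath_lemma}, is: prove that every splitting of $S(1)$ or $S(2)$ is $4$-periodic, then split according to whether the two semicolorings have disjoint color sets (splittings of $S(2)$, item~2) or share colors (splittings of $S(1)$, item~3, where a same-colored pair in adjacent blocks yields the matched condition).
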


We start the study of the perfect colorings of the~$\overline{K_{n}}$-times 
path graph with a~description of its admissible reduced
semicolorings.

\begin{lemma} 
	The admissible reduced semicolorings of the graph
	$C_{\infty} \cdot \overline{K_{n}}$ are exhausted 
	by the following four infinite series: 
	three series of mirror %semicolorings 
	and one series of cyclic semicolorings.
	\label{FeasibleHalfcoloringsOfEmptyGraphTimesPath_lemma}
\end{lemma}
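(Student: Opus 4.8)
The plan is to lean on Corollary~\ref{MonochromeBlockColoringsOfMultiPathCorollary}, which tells us that the reduced colorings of $C_{\infty} \cdot \overline{K_{n}}$ are block-monochrome, so a reduced semicoloring assigns a single color to each block of one part of the bipartition. Since $C_{\infty} \cdot \overline{K_{n}}$ is bipartite with the even-indexed blocks in one part and the odd-indexed blocks in the other, and since two even blocks $V_{2t}$ and $V_{2t+2}$ share exactly the common neighbor block $V_{2t+1}$, the even blocks equipped with this ``common-neighbor'' adjacency form a graph isomorphic to $C_{\infty}$. The heart of the argument is to show that an admissible reduced semicoloring, read off as the sequence of block colors along this part, is itself a perfect coloring of $C_{\infty}$; the lemma then follows at once from Lemma~\ref{PerfectColoringsOfInfinitePath}, which lists the perfect colorings of $C_{\infty}$ as the three mirror series and the one cyclic series.

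To carry this out I would denote by $c_m$ the color of block $V_m$ (well defined by block-monochromeness) in a perfect coloring $\psi$ witnessing admissibility of the given semicoloring. Perfectness of $\psi$ at a vertex of an odd block $V_{2t+1}$ forces the color multiset of its neighborhood $V_{2t}\cup V_{2t+2}$, i.e.\ the unordered pair $\{c_{2t},c_{2t+2}\}$, to depend only on $c_{2t+1}$; write this pair as $P(c_{2t+1})$. Symmetrically, perfectness at a vertex of an even block $V_{2t}$ forces the unordered pair $\{c_{2t-1},c_{2t+1}\}$ of neighboring odd-block colors to depend only on $c_{2t}$. Fixing $a=c_{2t}$ and writing $\{c_{2t-1},c_{2t+1}\}=\{p,q\}$, one notes that $a\in P(c_{2t-1})$ and $a\in P(c_{2t+1})$, so that $c_{2t-2}$ is the partner of $a$ inside $P(c_{2t-1})$ and $c_{2t+2}$ is the partner of $a$ inside $P(c_{2t+1})$.

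The key step, and the only place that requires care, is to observe that the unordered pair $\{c_{2t-2},c_{2t+2}\}$ does not depend on the left/right assignment of $p$ and $q$ to the two odd neighbors: in either case it equals the unordered pair consisting of the partner of $a$ determined by $p$ and the partner of $a$ determined by $q$, hence is a function of $\{p,q\}$ and therefore of $a$ alone. This symmetry is exactly what lets the distance-$2$ structure of the part be governed by perfectness conditions that individually see only distance-$1$ neighborhoods. It follows that $c_{2t}$ determines $\{c_{2t-2},c_{2t+2}\}$, so $t\mapsto c_{2t}$ is a perfect coloring of the path $C_{\infty}$ formed by the even blocks.

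Finally I would invoke Lemma~\ref{PerfectColoringsOfInfinitePath} to conclude that this sequence of block colors is one of the three mirror series or the cyclic series, and hence that the reduced semicoloring is the corresponding block-monochrome coloring of the part; as the same reasoning applies verbatim to the odd part, every admissible reduced semicoloring is exhausted by the four announced series. I expect the main obstacle to be precisely the symmetry observation in the key step above, everything else being a direct translation of the perfectness of $\psi$ into conditions on the block colors, together with the appeal to the already-established Lemma~\ref{PerfectColoringsOfInfinitePath}.
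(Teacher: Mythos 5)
Your proposal is correct, but it takes a genuinely different route from the paper. The paper's proof is essentially a one-liner: it takes the reduced colorings already classified in Corollary~\ref{MonochromeBlockColoringsOfMultiPathCorollary} as the block-monochrome colorings with periods $S_{11}(k)$, $S_{12}(k)$, $S_{22}(k)$, $S(k)$, partitions each into its two semicolorings, and asserts that the even- and odd-indexed subsequences of these explicit periods again fall into the four series, illustrating this only by the example $S_{22}(3)=[2\;1\;0\;0\;1\;2]$ splitting into the cyclic sequences $[0\;1\;2]$ and $[0\;2\;1]$. You instead use only block-monochromeness of reduced colorings (which, strictly speaking, is Lemma~\ref{MonochromeBlocksColoringLemma} rather than the Corollary you cite, though the Corollary subsumes it) and prove an abstract intermediate statement: the block-color sequence along one part of any block-monochrome perfect coloring of $C_{\infty}\cdot\overline{K_{n}}$ is itself a perfect coloring of $C_{\infty}$. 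Your key symmetry step is sound, including the degenerate cases where the two odd neighbors share a color or where a pair $P(\cdot)$ is a repeated color: the unordered pair $\{c_{2t-2},c_{2t+2}\}$ is always the pair of partners of $a=c_{2t}$ inside $P(p)$ and $P(q)$, hence independent of the left/right placement of $p$ and $q$, so it is a function of $a$ alone. After that, Lemma~\ref{PerfectColoringsOfInfinitePath} applies directly to the semicoloring. What your route buys is rigor and uniformity: it replaces the paper's case inspection of even/odd subsequences of four explicit families (which the paper only exemplifies, never verifies) by a single structural argument, and it does not even need the full classification of reduced colorings as input. What the paper's route buys is brevity, and its example simultaneously exhibits realizability, i.e., that the announced series do occur as semicolorings of reduced colorings, a direction your proof (like the paper's, beyond the example) does not treat systematically.
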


\begin{proof} 
	Partitioning any reduced perfect
	coloring of the graph $C_{\infty} \cdot \overline{K_{n}}$
	(see Corollary \ref{MonochromeBlockColoringsOfMultiPathCorollary}) into semicolorings produces two color sequences. Each of them belongs to cyclic or one of three mirror series. For example, the reduced coloring of $\overline{K_{n}}$-times path with period  $S_{22}(3)=[2\;1\;0\;0\;1\;2]$ is obtained by conjugation of two cyclic semicolorings -- $[0\;1\;2]$ and $[0\;2\;1]$.
\end{proof}

In Lemma~\ref{BlockMonochromeColoringsWithNonDisjunctiveSplit_lemma}
and Corollary~\ref{BlockMonochromeColoringsWithNonDisjunctiveSplit_corollary}
we characterize the reduced colorings of the~$\overline{K_{n}}$-times
path graph admitting a~non-disjunctive splitting.

\begin{lemma} 
	If a~perfect coloring $\psi$ of the graph
	$C_{\infty} \cdot \overline{K_{n}}$ is obtained by 
	non-disjunctive splitting of a~reduced coloring $\phi$, 
	then $\phi$ is a~conjugation of one-color semicolorings 
	(either bipartite or non-bipartite).
	\label{BlockMonochromeColoringsWithNonDisjunctiveSplit_lemma}
\end{lemma}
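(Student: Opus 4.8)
The plan is to prove the contrapositive: if the reduced coloring $\phi$ is \emph{not} a conjugation of one-color semicolorings --- equivalently, not $2$-periodic (the monochrome period $[0]$ or the alternating period $[0\ 1]$) --- then every splitting $\psi$ of $\phi$ is disjunctive. Call a color $a$ of $\psi$ \emph{rigid} if any two blocks of $\phi$-color $a$ that share a $\psi$-color in fact have the same multiset of $\psi$-colors. By Lemma~\ref{EmptyIntersectionOfDifferentColoredBlocksSplit_lemma} two blocks sharing a $\psi$-color must have the same $\phi$-color, so ``every color is rigid'' is exactly the statement that $\psi$ is disjunctive. The non-disjunctivity hypothesis therefore produces blocks $V_p,V_q$ with $\phi(V_p)=\phi(V_q)=a$, with $\psi(V_p)\cap\psi(V_q)\neq\varnothing$ but $\psi(V_p)\neq\psi(V_q)$: a \emph{non-rigid} color $a$. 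The goal is to show that a non-rigid color can occur only in the $2$-periodic reduced colorings.

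The single tool is a local equation from perfectness of $\psi$. If $V_i$ and $V_{i'}$ both contain a vertex of some $\psi$-color $c$, then comparing the neighbourhoods $V_{i-1}\cup V_{i+1}$ and $V_{i'-1}\cup V_{i'+1}$ (an internal edge is absent in $\overline{K_n}$) gives
\[
N_j(i-1)+N_j(i+1)=N_j(i'-1)+N_j(i'+1)\qquad\text{for every color }j.
\]
By Lemma~\ref{EmptyIntersectionOfDifferentColoredBlocksSplit_lemma} the colors split off from distinct $\phi$-colors are disjoint, so this identity may be read off one $\phi$-class at a time. When a block has two neighbours of \emph{different} $\phi$-colors, restricting $j$ to one such class isolates a single neighbour on each side and turns the identity into an equality of the full $\psi$-multisets of two individual blocks. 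This ``peel off a neighbouring class'' step, applied once outward and once back, is the whole mechanism. By Corollary~\ref{MonochromeBlockColoringsOfMultiPathCorollary} and Lemma~\ref{PerfectColoringsOfInfinitePath} the coloring $\phi$ is one of the four explicit series, so the $\phi$-colors of the neighbours of a color-$a$ block, and the back-multiplicities $m^{\phi}_{ba}$, are known exactly, which keeps the bookkeeping finite.

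I would carry this out in two stages. Call $a$ \emph{generic} if its two neighbouring blocks carry different $\phi$-colors. For a generic $a$ the peeling isolates a single neighbour on each side, so from $V_p,V_q$ one obtains blocks $W_p,W_q$ of a common $\phi$-color $b$ with $\psi(W_p)=\psi(W_q)$; in each series one of the two neighbour-colors $b$ satisfies $m^{\phi}_{ba}=1$, so $W_p$ (resp. $W_q$) has a unique color-$a$ neighbour, namely $V_p$ (resp. $V_q$). Peeling the class of $a$ back from $W_p,W_q$ then yields $\psi(V_p)=\psi(V_q)$, contradicting non-rigidity; hence every generic color is rigid. In the second stage I treat the remaining non-generic colors --- the valley $0$ (in $S_{11}$ and $S_{12}$) and the peak $k{-}1$ (in $S_{11}$) --- whose two neighbours share a single $\phi$-color $b$. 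Here the identity over the class of $b$ gives only an equality of the two-element multisets $\{\psi(V_{p-1}),\psi(V_{p+1})\}$ and $\{\psi(V_{q-1}),\psi(V_{q+1})\}$; but $b$ is generic, hence already rigid, so its values have pairwise equal-or-disjoint supports and the multiset equality forces a matching, say $\psi(V_{p-1})=\psi(V_{q-1})$. Since $m^{\phi}_{ba}=1$ at a turning color, peeling that class back returns $\psi(V_p)=\psi(V_q)$, so the turning colors are rigid as well.

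Finally I would observe that this chain certifies rigidity of \emph{every} color unless no color is generic and no non-generic color has a generic neighbour --- which, among the four series, happens exactly for the monochrome $[0]$ and the alternating $[0\ 1]$. Thus a non-rigid (equivalently non-disjunctive) splitting forces $\phi$ to be one of these, i.e. a conjugation of one-color semicolorings, as claimed. The main obstacle is precisely the second stage: at a turning point the first use of the local equation yields only a multiset identity, so individual blocks cannot be isolated until the neighbouring generic color has already been proven rigid; this is what forces the two-stage order and is where the explicit list of reduced colorings (and the resulting control of the back-multiplicities $m^{\phi}_{ba}$) is essential. Checking that a usable generic neighbour exists for every turning color of every period-$\ge 3$ member of the four series, and fails only in the $2$-periodic members, is the technical heart of the proof.
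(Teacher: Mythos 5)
Your rigidity framework and the two-stage peeling argument are sound for most members of the four series, but the arithmetic fact both stages rest on --- ``in each series one of the two neighbour-colors $b$ of a generic color $a$ satisfies $m^{\phi}_{ba}=1$'' --- is false for $S_{11}(3)=[2\;1\;0\;1]$. There the unique generic color is $1$, and both of its neighbour colors are turning colors, with $m^{\phi}_{01}=m^{\phi}_{21}=2$; the outward peel from two $1$-blocks still works, but the return step does not, because each $0$-block and each $2$-block has \emph{two} class-$1$ neighbours, so the class-$1$ identity only compares unions of pairs of $1$-blocks. Stage two then collapses as well, since for the turning colors $0$ and $2$ it presupposes that their generic neighbour $1$ is already rigid. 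Hence your closing claim that the chain fails ``only in the $2$-periodic members'' is wrong: it also fails for the $4$-periodic member $S_{11}(3)$. Worse, this is not repairable inside your toolkit. Take $n=2$ and the $4$-periodic coloring with blocks $\{2,2\},\,\{1,3\},\,\{0,0\},\,\{1,1\}$: it is perfect, it respects the pattern $[2\;1\;0\;1]$, and its three pattern classes $\{2\},\{1,3\},\{0\}$ are pairwise disjoint, so it satisfies everything your peeling ever uses (perfectness of $\psi$ plus Lemma~\ref{EmptyIntersectionOfDifferentColoredBlocksSplit_lemma}); yet the two ``$1$''-blocks share the color $1$ while having different multisets. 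So rigidity for the $S_{11}(3)$ pattern simply cannot be proved from those hypotheses.

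The missing idea is exactly the one the paper inserts at this point: $S_{11}(3)$ need not be considered because it is never a \emph{reduced} coloring. In the block-monochrome coloring $[2\;1\;0\;1]$ the parameter rows of colors $0$ and $2$ coincide, so $0\sim 2$, and any gluing produces $S(2)$, never $S_{11}(3)$ (indeed, the counterexample above reduces to $S(2)$, in agreement with the theorem). This step uses the full strength of $\hat\psi=\phi$ --- that the pattern classes are the exact equivalence classes --- which your peeling argument never invokes; the paper's proof, which proceeds instead via the bipartite semicoloring structure (a non-disjunctive pair forces a monochrome part, leaving $S(1)$, $S(2)$, $S_{11}(3)$), meets the same obstacle and disposes of it with precisely the remark that ``$S_{11}(3)$ cannot be obtained by gluing.'' A secondary gap: your opening equivalence ``every color rigid $\Leftrightarrow$ $\psi$ disjunctive'' needs the direction ``all rigid $\Rightarrow$ disjunctive'' to be proved (one must build a perfect coloring of $C_{\infty}$ from the multiset classes and verify its perfectness); the paper's remark after Lemma~\ref{EmptyIntersectionOfDifferentColoredBlocksSplit_lemma} supplies only the converse implication. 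That part is true and fixable, but the $S_{11}(3)$ case is a genuine hole in the proposal as written.
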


\begin{proof} 
	Let $V_i$ and $V_j$ be two $a$-colored copies
	of the~empty graph admitting a~non-disjunctive splitting, i.e.,
	$\psi(V_{i}) \neq \psi(V_{j})$.
	Consider the following two cases: 
	the blocks $V_i$ and $V_j$ 
	belong to one part of the graph
	$C_{\infty} \cdot \overline{K_{n}}$ in the first case
	and to different parts in the second.
	
	Study the first case.
	Assume that the reduced semicoloring of the part which 
	$V_i$ and $V_j$ belong to contains more than one color.
	By Lemma~\ref{FeasibleHalfcoloringsOfEmptyGraphTimesPath_lemma},
	at least one of the blocks $V_{i-2}$ or $V_{i+2}$ in
	the coloring $\phi$ is colored with a~color~$b$ 
	different from~$a$.
	Without loss of generality, let $V_{i+2}$ be such a~block.
	Let the block $V_{i+1}$ be colored with a~color~$x$, 
	which can coincide with~$a$ or~$b$.
	By Corollary~\ref{MonochromeBlockColoringsOfMultiPathCorollary},
	the right or left neighbor of the copy of $V_j$ is also of color~$x$.
	For definiteness, let $V_{j+1}$ be such a~copy; 
	then $V_{j+2}$ is colored with~$b$.
	
	The color sets of the neighborhoods of the $x$-colored vertices in
	the coloring $\psi$ coincide;
	therefore, 
	\begin{center}
		$\psi(V_{i}) \cup \psi(V_{i+2}) = \psi(V_{j}) \cup \psi(V_{j+2})$.
	\end{center} 
	By Lemma~\ref{EmptyIntersectionOfDifferentColoredBlocksSplit_lemma}, 
	the latter equality is valid only in the case when
	$\psi(V_{i}) = \psi(V_{j})$ and $\psi(V_{i+2})= \psi(V_{j+2})$;
	a~contradiction.
	Hence, the part which $V_i$ and $V_j$ belong to
	is monochrome colored in~$\phi$.
	
	Describe all reduced perfect colorings 
	of the graph under consideration 
	to which belong that semicoloring.
	It is easy to see that 
	these are the colorings $S(1)$, $S(2)$, and $S_{11}(3)$.
	Note that $S_{11}(3)$ cannot be obtained by gluing the colors 
	of the perfect coloring: the corresponding $4$-periodic perfect colorings
	get glued immediately in~$S(2)$.
	
	For the second case, the proof is similar, %to that in the first
	with the only difference that we suppose that at least one
	semicoloring in $\phi$ is not one-colored;
	a~contradiction.
	
	Thus, the reduced coloring admitting a~non-disjunctive splitting
	is a~conjugation of one-colored semicolorings 
	(bipartite or non-bipartite).
\end{proof}

\begin{corollary} 
	The only reduced colorings of the graph
	$C_{\infty} \cdot \overline{K_{n}}$ 
	admitting non-disjunctive splittings
	are $S(1)$ (non-bipartite case) and $S(2)$ (bipartite case).
	\label{BlockMonochromeColoringsWithNonDisjunctiveSplit_corollary}
\end{corollary}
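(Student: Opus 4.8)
The plan is to read the corollary off Lemma~\ref{BlockMonochromeColoringsWithNonDisjunctiveSplit_lemma}. That lemma already guarantees that any reduced coloring $\phi$ admitting a non-disjunctive splitting is a conjugation of two one-colored semicolorings, so the entire task reduces to identifying which reduced colorings from Corollary~\ref{MonochromeBlockColoringsOfMultiPathCorollary} have this form. First I would record the bipartition of $C_{\infty}\cdot\overline{K_{n}}$: since $\overline{K_{n}}$ is edgeless, two vertices $v_{ij}$ and $v_{i'j'}$ are adjacent exactly when $|i-i'|=1$, so the two parts are the even-indexed and the odd-indexed blocks, and a semicoloring is the restriction of $\phi$ to one part. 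A one-colored semicoloring then means that every block in that part carries a single common color; combined with the block-monochrome structure of reduced colorings (Lemma~\ref{MonochromeBlocksColoringLemma}), requiring both semicolorings to be one-colored forces the least period of $\phi$ to use at most one color per part.

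Next I would dispose of the two cases supplied by the lemma. In the bipartite case the color sets of the two semicolorings are disjoint, so one part is monochrome of color $a$ and the other monochrome of color $b$ with $a\neq b$; the block sequence is therefore $[a\;b]$, which is the cyclic coloring $S(2)$. In the non-bipartite case the color sets of the two semicolorings coincide (the graph is connected), so a one-colored pair must use the same color $a=b$, giving the block sequence $[a]$, that is $S(1)$. No mirror series survives this analysis: every genuine reduced mirror coloring has at least one semicoloring containing two or more colors, while the degenerate small-parameter representations, such as $S_{22}(1)=[0\;0]$ or $S_{11}(3)$, are not reduced and glue immediately to $S(1)$ or $S(2)$, exactly as already noted in the proof of Lemma~\ref{BlockMonochromeColoringsWithNonDisjunctiveSplit_lemma}.

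It remains to confirm that $S(1)$ and $S(2)$ genuinely \emph{do} admit non-disjunctive splittings, so that the list is exact and not merely an upper bound; for $S(2)$ I would conjugate two $2$-periodic semicolorings whose color sets intersect nontrivially, and for $S(1)$ two matched $2$-periodic semicolorings sharing their colors, using the matched-semicoloring construction introduced above. The one delicate point—which is really the only obstacle here, the rest being immediate translation of the lemma—is the exhaustiveness of the case split: one must be sure that no mirror coloring slips in through a degenerate small-parameter instance. This is precisely what the non-reducedness observation carried over from the preceding lemma rules out, so the enumeration is complete and the only reduced colorings admitting non-disjunctive splittings are $S(1)$ and $S(2)$.
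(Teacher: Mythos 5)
Your proposal is correct and takes essentially the same route as the paper: the paper states this corollary without a separate proof, reading it directly off Lemma~\ref{BlockMonochromeColoringsWithNonDisjunctiveSplit_lemma} (conjugation of one-colored semicolorings gives $[a\;b]=S(2)$ in the bipartite case and $[a]=S(1)$ in the non-bipartite case), with the exclusion of $S_{11}(3)$ already handled inside that lemma's proof, exactly as you recount. Your additional verification that $S(1)$ and $S(2)$ genuinely admit non-disjunctive splittings is deferred in the paper to Lemma~\ref{NonDisjunctiveColoringsOfEmptyGraphTimesPath_lemma}, and your only slip is cosmetic: the degenerate $S_{22}(1)=[0\;0]$ is not ``non-reduced'' --- it simply coincides with $S(1)$.
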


All non-disjunctive colorings of the~$\overline{K_{n}}$-times path graph
are described in Lemma~\ref{NonDisjunctiveColoringsOfEmptyGraphTimesPath_lemma}.

\begin{lemma} 
	The non-disjunctive perfect colorings of the graph
	$C_{\infty} \cdot \overline{K_{n}}$ 
	are exhausted by the following list: 
	
	$1$~Non-disjunctive bipartite colorings obtained by conjugation
	of arbitrary $2$-periodic semicolorings; 
	
	$2$~Non-disjunctive non-bipartite colorings obtained by conjugation
	of two matched $2$-periodic semicolorings.
	\label{NonDisjunctiveColoringsOfEmptyGraphTimesPath_lemma}
\end{lemma}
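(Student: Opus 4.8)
The plan is to start from Corollary~\ref{BlockMonochromeColoringsWithNonDisjunctiveSplit_corollary}, which tells us that the reduced coloring $\phi=\hat\psi$ of any non-disjunctive perfect coloring $\psi$ is either $S(2)$ (the bipartite case) or $S(1)$ (the non-bipartite case). In both cases $C_\infty\cdot\overline{K_n}$ is bipartite with parts formed by the even-indexed and the odd-indexed blocks, and each vertex of $V_i$ is adjacent exactly to $V_{i-1}\cup V_{i+1}$; I will write $N_j(i)$ for the number of $j$-colored vertices in $V_i$ and repeatedly use that a vertex of $V_i$ sees color $j$ exactly $N_j(i-1)+N_j(i+1)$ times. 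For each case I will first show that $\psi$ must have the stated form (this is the substantive direction), and then note that the converse---every such conjugation is perfect---is immediate: in the bipartite case it is a direct neighbourhood computation, and in the non-bipartite case it is the sufficiency of the matched condition already observed when that condition was defined before Theorem~\ref{EmptyGraphTimesPathTheorem}.

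For the bipartite case $\phi=S(2)$, gluing collapses all colors occurring in the even blocks to the single color $0$, so these colors are pairwise equivalent. As the graph is bipartite, the row of the parameter matrix of an even color is supported on the columns of odd colors; hence equivalence of two even colors, which only permits disagreement in the two even columns in question, forces equality of their entire rows, and all even colors share one common neighbourhood multiset $\nu$. Since every block contains $n\ge 1$ vertices, applying this to an arbitrary even block $V_{2m}$ gives $\psi(V_{2m-1})\cup\psi(V_{2m+1})=\nu$ for all $m$, whence $\psi(V_{2m-1})=\psi(V_{2m+3})$; thus the odd semicoloring is $2$-periodic, and by symmetry so is the even one. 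Their color sets are disjoint because the coloring is bipartite, and conversely any two $2$-periodic semicolorings with disjoint color sets conjugate to a perfect coloring, since then every even vertex sees the union of the two odd blocks of the period and symmetrically every odd vertex sees the union of the two even blocks; this yields item~$1$.

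For the non-bipartite case $\phi=S(1)$, all colors are now pairwise equivalent, so the parameter matrix has the form $m_{cd}=\beta_d$ for $c\ne d$ and $m_{cc}=\alpha_c$, i.e. every off-diagonal entry of a fixed column coincides. Reading this through block counts, for every block $V_i$ and every color $c$ present in $V_i$ we obtain $N_d(i-1)+N_d(i+1)=\beta_d$ for all $d\ne c$; consequently $S_j(i):=N_j(i-1)+N_j(i+1)$ equals $\beta_j$ unless $V_i$ is monochromatic of color $j$, in which case it equals $\alpha_j$. The crucial step is to remove this exception. Since $\psi$ is non-disjunctive, by Lemma~\ref{EmptyIntersectionOfDifferentColoredBlocksSplit_lemma} two blocks of equal $\phi$-color share a color while carrying different multisets, so at least one of them, say $V_{i_0}$, is non-monochromatic; summing $S_j(i_0)=\beta_j$ over all $j$ gives $\sum_j\beta_j=2n$. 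Comparing this with $\sum_j S_j(i)=2n$ evaluated at a hypothetical monochromatic block of some color $j$ yields $\alpha_j=\beta_j$, so in fact $S_j(i)=\beta_j$ for all $i$ and $j$. Constancy of $S_j$ is exactly the matched condition, because $N_j(i)+N_j(i+2)=S_j(i+1)=S_j(i)$; moreover $S_j(i)=S_j(i+2)$ gives $N_j(i-1)=N_j(i+3)$, i.e. period $4$, so both semicolorings are $2$-periodic and their conjugation is perfect by the sufficiency of the matched condition, giving item~$2$.

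The main obstacle is precisely the possible discrepancy between $\alpha_j$ and $\beta_j$ at monochromatic blocks in the non-bipartite case, which a priori could destroy both the periodicity and the matched condition. The resolution is the counting argument above: non-disjunctiveness guarantees a non-monochromatic block, which pins down $\sum_j\beta_j=2n$ and thereby forces $\alpha_j=\beta_j$ whenever a monochromatic block is present. Once this is secured, both the $2$-periodicity and the matched condition fall out of the single identity $S_j\equiv\beta_j$.
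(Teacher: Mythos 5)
Your proof is correct, and in the decisive (non-bipartite) case it follows a genuinely different route from the paper's. The setup is the same: both arguments reduce to splittings of $S(1)$ and $S(2)$ via Corollary~\ref{BlockMonochromeColoringsWithNonDisjunctiveSplit_corollary}. In the bipartite case the two proofs are close in spirit --- the paper briefly derives $\psi(V_i)=\psi(V_{i+4})$ from the equivalence of the colors of $V_{i+1}$ and $V_{i+3}$, while you make the same point rigorous by noting that bipartiteness zeroes out the exceptional columns, so equivalent part-colors have literally identical parameter rows and a common neighbourhood multiset $\nu$. The real divergence is in the non-bipartite case: the paper first establishes $4$-periodicity and then uses non-bipartiteness to locate two same-colored vertices in \emph{adjacent} blocks, reading the matched condition off their equal neighbourhoods; you never look for such a pair, but instead use that all colors of a splitting of $S(1)$ are pairwise equivalent, so the parameter matrix is column-constant off the diagonal ($m_{cd}=\beta_d$ for $c\neq d$), and then the counting argument ($\sum_j\beta_j=2n$ at a non-monochromatic block, whence $\alpha_j=\beta_j$ at any monochromatic one) gives $N_j(i-1)+N_j(i+1)\equiv\beta_j$, producing $4$-periodicity and matchedness in a single stroke. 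This buys a self-contained derivation that does not lean on the paper's only-sketched periodicity argument. One caveat: to produce the non-monochromatic block you invoke the converse of the paper's disjunctiveness criterion (non-disjunctive implies existence of two blocks sharing a color while carrying distinct multisets), which the paper states only in one direction; the converse is true at the level of block multisets but is not proved anywhere. However, the weaker fact you actually need is immediate --- if every block were monochromatic, the coloring would be block-monochrome and hence disjunctive --- so this is a citation slip rather than a genuine gap.
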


\begin{proof} 
	By Corollary~\ref{BlockMonochromeColoringsWithNonDisjunctiveSplit_corollary},
	we shall describe the non-disjunctive splittings 
	of the colorings $S(1)$ and~$S(2)$.
	Consider the sequence of blocks $V_{i-1}$,
	$V_{i}$, $V_{i+1}$, $V_{i+2}$, and $V_{i+3}$.
	Due to the fact that $V_{i+1}$ and $V_{i+3}$ 
	have the same color in $S(1)$ and $S(2)$ 
	and the vertices of $V_{i+2}$
	are in the $1$-neighborhoods of both $V_{i+1}$ and $V_{i+3}$,
	we obtain $\phi(V_{i})=\phi(V_{i+4})$ for every splitting
	of such block-monochrome colorings.
	Since $i$ may be arbitrary, 
	the perfect coloring $\phi(v)$ is $4$-periodic
	and its semicolorings have the period of length~$2$.
	
	Note that the result of conjugation of arbitrary $2$-periodic
	semicolorings of $C_{\infty} \cdot \overline{K_{n}}$
	with disjoint sets of colors is a~perfect coloring.
	The set of non-disjunctive bipartite perfect colorings 
	of the~$\overline{K_{n}}$-times path graph consists of such conjugations.
	
	In a~non-disjunctive non-bipartite perfect coloring $\psi$
	of $C_{\infty} \cdot \overline{K_{n}}$,
	there is a~pair of the same-colored vertices 
	in adjacent blocks $V_{i}$ and~$V_{i+1}$.
	The sets of colors of their neighborhoods coincide; 
	therefore, $N_{j}(i-1) + N_{j}(i+1)=N_{j}(i) + N_{j}(i+2)$ 
	for each color~$j$ of~$\psi$. Thus, perfect coloring $\psi$ is obtained by conjugation of two matched semicolorings.  
%	
%	As we know, the converse is also true:
%	the conjugation of two matched semicolorings produces non-bipartite perfect colorings.
	
	The characterization of non-disjunctive perfect colorings of 
	a~$\overline{K_{n}}$-times path graph is completed.
\end{proof}

The assertion of Theorem~\ref{EmptyGraphTimesPathTheorem}
follows from Lemma~\ref{NonDisjunctiveColoringsOfEmptyGraphTimesPath_lemma}.

\section{Perfect colorings of the~$K_{n}$-times path graph}

\begin{theorem} 
	The perfect colorings of the graph
	$C_{\infty} \cdot K_{n}$ are exhausted by the following list: 
	
	$1$~Disjunctive perfect colorings; 
	
	$2$~Non-disjunctive $3$-periodic colorings.
	\label{CompleteGraphTimesPathTheorem} 
\end{theorem}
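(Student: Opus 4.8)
The plan is to combine the description of reduced colorings (Corollary~\ref{MonochromeBlockColoringsOfMultiPathCorollary}) with a single ``window-sum'' invariant that encodes perfectness on $C_\infty\cdot K_n$. Writing $N_\ell(i)$ for the number of $\ell$-colored vertices in block $V_i$ and setting $T_\ell(i)=N_\ell(i-1)+N_\ell(i)+N_\ell(i+1)$, the neighbourhood of a vertex of colour $j$ in $V_i$ consists of the whole blocks $V_{i-1},V_{i+1}$ together with $V_i$ minus the vertex itself, so it contains exactly $T_\ell(i)-[\ell=j]$ vertices of colour $\ell$. Hence a colouring is perfect if and only if, for every colour $j$, the vector $(T_\ell(i))_\ell$ is the same for all blocks $V_i$ containing colour $j$. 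Two consequences are immediate: a $3$-periodic colouring is automatically perfect, since its window sums $T_\ell(i)=N_\ell(0)+N_\ell(1)+N_\ell(2)$ are independent of $i$, so together with Lemma~\ref{DisjunctiveColoringsLemma} both entries of the list are genuine perfect colourings; and, conversely, whenever two blocks share a colour their window-sum vectors coincide.

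For the necessity I would take a non-disjunctive perfect colouring $\psi$ and prove it is $3$-periodic. By Lemma~\ref{EmptyIntersectionOfDifferentColoredBlocksSplit_lemma} the two blocks witnessing non-disjunctivity carry the same colour $a$ of the reduced colouring $\phi$, which by Corollary~\ref{MonochromeBlockColoringsOfMultiPathCorollary} is one of the four series. The core of the argument is to show that the only reduced colouring admitting a non-disjunctive splitting is $S(1)$. For the cyclic colourings $S(k)$ with $k\ge 2$ and for the mirror type $(1,1)$, every block has both neighbours coloured differently from itself, so by Lemma~\ref{EmptyIntersectionOfDifferentColoredBlocksSplit_lemma} the neighbours' palettes are disjoint from the palette $P_a$ of the block; restricting the window-sum equality $T(p)=T(q)$ of two colour-$a$ blocks to $P_a$ then yields $V_p=V_q$, contradicting non-disjunctivity.

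The delicate case is the mirror types $(1,2)$ and $(2,2)$, where an extremal colour occurs in two adjacent blocks, so the restriction no longer isolates a single block. Here I would run a short chain of window-sum equalities: if the doubled pair $V_s,V_{s+1}$ of reduced colour $a$ shares a colour, then $T(s)=T(s+1)$ forces the flanking blocks to be equal, $V_{s-1}=V_{s+2}$; being equal they share their colours, so $T(s-1)=T(s+2)$, and restricting this equality to $P_a$ (which among $V_{s-2},V_{s-1},V_s$ and among $V_{s+1},V_{s+2},V_{s+3}$ is carried only by $V_s$, respectively $V_{s+1}$) gives $V_s=V_{s+1}$, a contradiction. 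I expect this propagation step to be the main obstacle, precisely because it is where the absence of a clean bipartite/semicolouring structure (available in the $\overline{K_n}$ case) must be compensated by hand; the small values of $k$, where the mirror and cyclic series degenerate to $S(1)$ or $S(2)$, would be checked separately.

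Finally, for $\phi=S(1)$ all colours of $\psi$ are mutually equivalent, so by Lemma~\ref{EquivalenceRelationLemma} the window-sum vector of any colour $c$ has the form $T_c=\boldsymbol\tau+\lambda_c\mathbf e_c$ for a common $\boldsymbol\tau$. Since every window sum has total weight $3n$, the scalar $\lambda_c=3n-\|\boldsymbol\tau\|_1$ is the same for all $c$; and if it were nonzero then every colour could occur only in monochromatic blocks, making $\psi$ block-monochrome and hence disjunctive. Thus $\lambda_c\equiv 0$, all window sums equal $\boldsymbol\tau$, and $T_\ell(i)=T_\ell(i+1)$ gives $N_\ell(i-1)=N_\ell(i+2)$ for every $\ell$, so $\psi$ is $3$-periodic. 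Assembling the cases yields the theorem.
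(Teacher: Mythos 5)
Your necessity argument hinges on the claim that $S(1)$ is the only reduced coloring admitting a non-disjunctive splitting, and your proof of that claim breaks exactly at the case you yourself flag as delicate: the mirror coloring of type $(1,2)$ with $k=2$, whose period is $[aba]$ (the pattern $\ldots abaabaaba\ldots$). This case does not degenerate to $S(1)$ or $S(2)$, so your ``small $k$'' caveat does not cover it, and for it the chain argument is invalid: the period has length $3$, so the blocks $V_{s-2}$ and $V_{s+3}$ are again $a$-blocks, and the parenthetical claim that among $V_{s-2},V_{s-1},V_s$ the palette $P_a$ is carried only by $V_s$ is simply false. Restricting $T(s-1)=T(s+2)$ to $P_a$ therefore yields only the multiset identity $\psi(V_{s-2})\uplus\psi(V_s)=\psi(V_{s+1})\uplus\psi(V_{s+3})$, which does not force $\psi(V_s)=\psi(V_{s+1})$. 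Worse, no purely local refinement can close this case, because the configuration you are trying to refute really occurs in perfect colorings: for $n=2$ take the $3$-periodic coloring with blocks $\{1,1\},\{3,3\},\{1,2\}$ repeated. It is perfect (every window sum equals $(3,1,2)$), it is non-disjunctive, and its two adjacent $a$-blocks $\{1,2\}$ and $\{1,1\}$ share color $1$ while differing; it satisfies every window-sum identity in your chain. It fails to be a counterexample to your Step~A only because all three of its colors happen to be mutually equivalent, so its full reduction is $S(1)$ rather than $[aba]$ --- but nothing in your argument establishes or uses that fact.

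What is missing is a global (not local) step, and this is precisely where the paper's proof does its real work: assuming $\psi(V_i)\neq\psi(V_{i+3})$ for a splitting of the $[aba]$ structure, it derives the linear drift $N_{c}(i+3p)=z-(p-1)(x-z)$ and gets a contradiction with the counts being bounded on an infinite path. The same repair is available inside your formalism: restricting the constancy of the window sums of $a$-blocks and of $b$-blocks to $P_a$ gives, for $\ell\in P_a$, the recursion $N_\ell(3(m+1))=N_\ell(3m)+(\tau^a_\ell-\tau^b_\ell)$, and boundedness forces $\tau^a_\ell=\tau^b_\ell$, hence $3$-periodicity --- but some such unboundedness argument must be added; it cannot be avoided. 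A secondary omission: in the mirror types your chain only treats the witness pair consisting of the two \emph{adjacent} blocks of the doubled extremal color; two non-adjacent blocks of that color (from different periods) are covered by neither of your two arguments, since such blocks do not have both neighbors differently colored. The rest of the proposal is correct and in places nicer than the paper: the window-sum criterion, the explicit sufficiency direction (the paper leaves the perfectness of arbitrary $3$-periodic colorings implicit), the disjoint-neighbor argument for $S(k)$ and $S_{11}(k)$, and the $T_c=\tau+\lambda_c e_c$, $\lambda_c\equiv 0$ argument for splittings of $S(1)$, which is a clean substitute for the paper's first case.
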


\begin{proof}
	The proof can be reduced to description of non-disjunctive
	splittings of the reduced perfect colorings of 
	the~$K_{n}$-times path graph.
	Let $\phi$ be a~reduced coloring of the graph under consideration 
	and $\psi$ one of its non-disjunctive splittings.
	Consider the following two cases: 
	with the coloring $\phi$ either being one-colored 
	or consisting of more than one color.
	
	Studying the first case,
	we consider the blocks following one another,
	starting with~$V_{i}$.
	Show that $\psi(V_i)=\psi(V_{i+3}) (*)$.
	If there exists a~pair of
	vertices of the same color in $V_{i+1}$ and $V_{i+2}$, 
	then equality $(*)$ holds.
	Suppose that there are no same-colored vertices in these blocks 
	and $\psi(v_{(i+1)j}) =p$ and $\psi(v_{(i+2)l})=q$ for some $j$
	and~$l$.
	Since $p$ and $q$ are equivalent, the number of vertices of color $s$
	$(s \neq p$, $s \neq q)$ adjacent to $v_{(i+1)j}$ is equal to
	the number of such neighbors of~$v_{(i+2)l}$:
	\begin{center}
		$N_{s}(i) + N_{s}(i+1) + N_{s}(i+2)
		=N_{s}(i+1) + N_{s}(i+2) + N_{s}(i+3) \Rightarrow N_{s}(i)=N_{s}(i+3)$.
	\end{center}
	
	If there is a~vertex of color~$t$
	different from $p$ and~$q$ in $V_{i+1}$ or $V_{i+2}$, 
	then $t \sim p$ and $t \sim q$.
	For these colors, by writing down an equality similar to the latter,  
	we obtain 
	$N_{q}(i)=N_{q}(i+3)$ and $N_{p}(i)=N_{p}(i+3)$;
	consequently, $\psi(V_i)=\psi(V_{i+3})$.
	
	The lack of vertices of color $t$ means that the blocks $V_{i+1}$
	and $V_{i+2}$ are monochrome-colored with the colors $p$ and~$q$
	respectively.
	Show that all admissible extensions of such a~fragment are $3$-periodic
	perfect colorings.
	
	If there is a~color~$r$ in the set $\psi(V_{i+3})$ such that
	$r \neq p$ and $r \neq q$, 
	then by equivalence of $q$ and~$r$
	the block $V_{i+4}$ is monochrome-colored with color~$p$.
	The multiset $\psi(V_{i+5})$ consists only of the elements $q$, 
	since $p \sim r$.
	Such a~fragment is uniquely extended to a~$3$-periodic perfect
	coloring.
	
	Let only $p$ and~$q$ be the elements of the set $\psi(V_{i+3})$;
	hence, the coloring $\psi$ is two-colored.
	In the case $q \in \phi (V_{i+3})$, the color set of the 
	neighborhoods of vertices of color~$q$ is defined uniquely.
	This allows us to extend the coloring $\phi(v)$ to the right 
	by a~$p$-colored block. The admissible extensions of this fragment are exhausted by $3$-periodic
	non-disjunctive and two disjunctive colorings $S_{12}(2)$ and $S_{22}(2)$.
	
	In the case of the~$p$-monochrome block $V_{i+3}$, the perfect
	extensions of such a~structure are exhausted by the disjunctive colorings
	$S_{12}(2)$ and $S(2)$.
	The characte\-rization of the non-disjunctive splittings of a~one-colored 
	reduced coloring is thus completed.
	
	Consider the case of a~reduced coloring $\phi$ with two or more colors.
	Let $V_i$ and $V_j$ be copies of a~complete graph admitting 
	non-disjunctive splitting;
	i.e., the elements of such blocks in $\phi$ are colored with the same color~$a$, 
	but $\psi(V_i) \neq \psi(V_j)$.
	
	At least one of the blocks adjacent to $V_i$ is colored differently 
	in the reduced coloring, with a~color  $b$ ($b \neq a$).
	This is also true for the neighbors of the copy of~$V_j$.
	Without loss of generality, we assume that 
	the blocks $V_{i+1}$ and $V_{j+1}$ are $b$-colored.
	Since
	$\psi(V_i) \neq \psi(V_j)$ and the color sets of the neighborhoods 
	of vertices from $V_{i}$ and $V_{j}$ in the coloring
	$\psi(v)$ coincide, we have
	$\psi(V_{i-1}) \cap \psi(V_{j}) \neq \varnothing$ and
	$\psi(V_{i}) \cap \psi(V_{j-1}) \neq \varnothing$.
	Consequently, the color~$a$ corresponds to the elements 
	of the copies $V_{i-1}$ and $V_{j-1}$ in $\phi$.
	Arguing similarly for $V_{i+1}$ and $V_{j+1}$, we find that
	the vertices of $V_{i+2}$ and $V_{j+2}$ are also $a$-colored
	in this coloring.
	Thus, the period of the reduced coloring $\phi$ has the form $[aba]$.
	
	Prove that every non-disjunctive splitting $\psi$ of such
	a~reduced coloring is $3$-periodic.
	Consider the sequence of blocks 
	$V_{i}$, $V_{i+1}$, $V_{i+2}$, and $V_{i+3}$.
	Let their colors in the coloring $\phi$ be equal to $a$, $a$, $b$, and $a$ 
	respectively and $\psi(V_i) \neq \psi(V_{i+1})$.
	Show that $\psi(V_i) = \psi(V_{i+3})$.
	
	Suppose that this is not true.
	Hence, there is a~color~$c$ such that the numbers 
	of vertices of this color in the blocks 
	$V_{i}, V_{i+1}$, and $V_{i+3}$ 
	are equal to $x$, $y$, and $z$ respectively, 
	while $x \neq z$.
	Without loss of generality, we can assume that $z < x$, i.e.,
	$x-z > 0$.
	The number of neighbors of color~$c$ in the neighborhood of each vertex
	is thus defined.
	For the vertices to which the color~$a$ corresponds 
	in the reduced coloring, this number is equal to $x+y$, 
	while for the elements of the $b$-colored copies it is $y+z$.
	Calculating the number of vertices of color~$c$ in the blocks
	$V_{i+4}$, $V_{i+5}$, $V_{i+6}$, etc., 
	we obtain $N_{c}(i+3p)=z-(p-1)(x-z)$.
	This implies that $N_{c}(i+3p)$ is monotone decreasing 
	with growth of $p$, which contradicts the infinity 
	of the graph under consideration;
	therefore, our assumption is false and $x=z$.
	Hence, we obtain the $3$-periodicity of the coloring $\psi$.
	
	Consequently, all non-disjunctive perfect colorings of a~$K_n$-times
	path graph have the period of length~$3$.
\end{proof}

Thus, the set of the perfect colorings of the graph $C_{\infty} \cdot K_{n}$
consists of two infinite series: disjunctive colorings and
non-disjunctive $3$-periodic colorings.

\section*{Conclusion} 

Creating the constructions
that make it possible to obtain 
perfect colorings of different types of graph products
from the perfect colorings of their factors
is an~important 
problem of graph theory. In particular, the $n$-dimensional binary cube 
$E^n$ can be presented as the product of hypercubes of smaller dimension.
The complete description of its perfect colorings is not known yet even in the case of two colors.

In this article, we study the lexicographic product of graphs. We show that  the set of perfect colorings of the graph $G \cdot H$ splits up into two subsets -- disjunctive and non-disjunctive colorings. Exploring constructions of non-disjunctive colorings for different pairs $(G,H)$ is a natural and interesting problem.

The simplest graph of such a~type is the lexicographic
product of the infinite path graph 
and an~arbitrary transitive graph $G$, i.e.
the~$G$-times path graph.
We described all perfect colorings of 
the $\overline{K_{n}}$- and $K_{n}$-times path graphs with an~arbitrary 
finite number of colors.

The multipath graphs can be viewed as extensions of the~infinite path graph and, consequently, the structures built on them may find applications in the group theory and crystallography.

\bigskip

\end{document}